\theoremstyle{plain}
\newtheorem{theorem}{Theorem}
\newtheorem{proposition}{Proposition}
\newtheorem{lemma}[theorem]{Lemma}
\theoremstyle{definition}
\newcommand{\sd}{\triangle}
\newcommand{\F}{\mathcal{F}}
\newcommand{\ZZ}{{\mathbb{Z}}}
\newcommand{\X}{\mathcal{X}}
\newcommand{\Y}{\mathcal{Y}}
\author{Kenjiro Takazawa\thanks{This work is partially supported by 
JSPS KAKENHI Grant Numbers 
JP16K16012, 
JP20K11699, 
Japan.}}
\title[Notes on equitable partitions into matching forests and into $b$-branchings]{Notes on equitable partitions into matching forests in mixed graphs and into $b$-branchings in digraphs}
\affiliation{
  % one line per affiliation, no postal codes, grant numbers or similar
  Department of Industrial and Systems Engineering, Faculty of Science and Engineering, 
  Hosei University, Japan. }
\keywords{branching, matching, delta-matroid, integer decomposition property}
\begin{document}
%%\publicationdetails{VOL}{2015}{ISS}{NUM}{SUBM}
\publicationdetails{24}{2022}{1}{13}{8719}
\maketitle
\begin{abstract}
An equitable partition into branchings in a digraph
is a partition of the arc set into branchings such that 
the sizes of any two branchings differ at most by one. 
For a digraph whose arc set can be partitioned into $k$ branchings, 
there always exists an equitable partition into $k$ branchings. 
In this paper, 
we present two extensions of equitable partitions into branchings in digraphs: those into matching forests in mixed graphs; and 
into $b$-branchings in digraphs. 
For matching forests, 
Kir\'{a}ly and Yokoi (2022) considered a tricriteria equitability based on 
the sizes of the matching forest, 
and the matching and branching therein. 
In contrast to this, 
we introduce a single-criterion equitability based on the number of covered vertices, 
which is plausible in the light of the delta-matroid structure of matching forests. 
While the existence of this equitable partition can be derived from a lemma in Kir\'{a}ly and Yokoi, 
we present its direct and simpler proof. 
For $b$-branchings, 
we define 
an 
equitability notion based on the size of the $b$-branching and the 
indegrees of all vertices, 
and 
prove that an equitable partition always exists. 
We then derive the integer decomposition property of the associated polytopes. 
\end{abstract}

\section{Introduction}
\label{SECintro}

Partitioning a finite set into its subsets with certain combinatorial structure is a fundamental topic in 
the fields of 
combinatorial optimization, discrete mathematics, and graph theory. 
The most typical partitioning problem is graph coloring, 
which amounts to partitioning the vertex set of a graph into stable sets. 
In particular, 
\emph{equitable coloring}, 
in which the numbers of vertices in any two stable sets differ at most by one, 
has attracted researchers' interest since the famous conjecture of Erd\H{o}s \cite{Erd64} 
on the existence of an equitable coloring with $\Delta + 1$ colors in a graph with maximum degree $\Delta$, 
which was later proved by Hajnal and Szemer\'{e}di \cite{HS70}. 

Equitable edge-coloring has been mainly considered in bipartite graphs: 
a bipartite graph with maximum degree $\Delta$ 
admits an equitable edge-coloring with $k$ colors for every $k \ge \Delta$  \cite{deW70,deW72,DM69,FF69,McD72}. 
%%Equitable edge-coloring in bipartite graphs can be generalized to equitable partition of 
Extension of equitable edge-coloring in bipartite graphs to equitable partition of 
the common ground set of two matroids into common independent sets has been a challenging 
topic in the literature \cite{DM76,FTY20}. 

One successful example is equitable partition into branchings in digraphs. 
Let $(V,A)$ denote a digraph with vertex set $V$ and arc set $A$. 
In a digraph $(V,A)$, an arc subset $B \subseteq A$ is a \emph{branching} if every vertex has at most one incoming arc in $B$ and $B$ 
includes no directed cycle. 
The following theorem is derived from Edmonds' disjoint branchings theorem \cite{Edm73}. 
For a real number $x$, 
let $\lfloor x \rfloor$ and $\lceil x \rceil$ denote 
the maximum integer that is not greater than $x$ 
and 
the minimum integer that is not less than $x$, 
respectively. 
\begin{theorem}[see Schrijver {\cite[Theorem 53.3]{Sch03}}]
\label{THMbra}
In a digraph $D=(V,A)$, 
if $A$ can be partitioned into $k$ branchings, 
then 
$A$ can be partitioned into $k$ branchings 
each of which has size $\lfloor |A|/k \rfloor$ or $\lceil |A|/k \rceil$. 
\end{theorem}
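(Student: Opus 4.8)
The plan is to derive the statement from Edmonds' disjoint branchings theorem in its root-prescribing form, reducing everything to a combinatorial rebalancing of root sets. For a branching $B$ write $R(B)$ for its set of roots (the vertices of indegree $0$ in $B$), so that $|B|=|V|-|R(B)|$; for nonempty $X\subseteq V$ let $\rho_A(X)$ denote the number of arcs of $A$ with head in $X$ and tail outside $X$. Given the partition $A=B_1\cup\cdots\cup B_k$ into branchings, put $R_i=R(B_i)$. Since each arc lies in exactly one $B_i$ and each vertex has indegree at most $1$ in each $B_i$, every vertex $v$ is a non-root in exactly $\rho_A(\{v\})$ of the branchings, so $\sum_i|R_i|=k|V|-|A|$; and whenever $R_i\cap X=\emptyset$ the branching $B_i$ contains an arc entering $X$, these arcs being distinct for distinct such $i$, so $|\{i:R_i\cap X=\emptyset\}|\le\rho_A(X)$ for every nonempty $X$. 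Edmonds' theorem is the converse: any $R_1,\dots,R_k\subseteq V$ with $\sum_i|R_i|=k|V|-|A|$ satisfying $|\{i:R_i\cap X=\emptyset\}|\le\rho_A(X)$ for all nonempty $X$ arises as the family of root sets of $k$ pairwise arc-disjoint branchings in $D$, and these have total size $\sum_i(|V|-|R_i|)=|A|$, hence partition $A$.

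Therefore it suffices to re-choose root sets $R'_1,\dots,R'_k$ with $\sum_i|R'_i|=k|V|-|A|$, still satisfying $|\{i:R'_i\cap X=\emptyset\}|\le\rho_A(X)$ for all nonempty $X$, but now with all $|R'_i|$ pairwise within $1$: the associated branchings then have sizes $|V|-|R'_i|$, which are integers summing to $|A|$ and pairwise within $1$, hence each equals $\lfloor|A|/k\rfloor$ or $\lceil|A|/k\rceil$. I would produce such root sets from $R_1,\dots,R_k$ by iterating a local move: as long as $|R_i|\ge|R_j|+2$ for some $i,j$, transfer a single, suitably chosen vertex from $R_i$ to $R_j$. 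Each move leaves $\sum_i|R_i|$ unchanged and strictly decreases $\sum_i|R_i|^2$, so the process terminates, necessarily at a state in which all $|R_i|$ are pairwise within $1$.

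The main obstacle is to show that such a move is always available. Passing from $(R_i,R_j)$ to $(R_i\setminus\{v\},R_j\cup\{v\})$ changes $|\{l:R_l\cap X=\emptyset\}|$ only on sets $X$ with $v\in X$, and it can increase this quantity (by exactly $1$) only at a \emph{tight} set $X$, meaning one with $|\{l:R_l\cap X=\emptyset\}|=\rho_A(X)$, such that $R_i\cap X=\{v\}$ and $R_j\cap X\neq\emptyset$. Because $\rho_A$ is submodular and $X\mapsto|\{l:R_l\cap X=\emptyset\}|$ is supermodular, the tight sets are closed under union and intersection; so, fixing an index $i$ with $|R_i|$ maximum, each $v\in R_i$ admits a unique largest tight set $Z_v$ with $v\in Z_v$ and $R_i\cap Z_v=\{v\}$ (the singleton $\{v\}$ is itself tight), and moving $v$ into $R_j$ is legal exactly when $R_j\cap Z_v=\emptyset$. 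The remaining point is that some $v\in R_i$, together with some $j$ having $|R_j|\le|R_i|-2$, satisfies $R_j\cap Z_v=\emptyset$; here one uses that $|\{l:R_l\cap Z_v=\emptyset\}|=\rho_A(Z_v)$ by tightness, that each $|R_l|\le|R_i|$, that every vertex $u$ lies in exactly $k-\rho_A(\{u\})$ of the sets $R_l$, and an uncrossing argument on the $Z_v$. This last step is where essentially all the work lies; granting it, Edmonds' theorem applied to $R'_1,\dots,R'_k$ gives the desired equitable partition. (Equivalently, one can recast the whole argument as an induction on $k$ in which one peels off a single branching of size $\lceil|A|/k\rceil$ whose deletion leaves the remaining arc set partitionable into $k-1$ branchings.)
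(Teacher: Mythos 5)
Your reduction of the theorem to re-choosing root sets is sound: the translation $|B_i|=|V|-|R(B_i)|$, the counting identity $\sum_i|R_i|=k|V|-|A|$, the use of Edmonds' theorem in root-prescribed form, and the characterization of when moving a vertex $v$ from $R_i$ to $R_j$ preserves feasibility (no tight set $X$ with $R_i\cap X=\{v\}$ and $R_j\cap X\neq\emptyset$) are all correct. But the proof is not complete: the assertion that, for $i$ with $|R_i|$ maximum, some $v\in R_i$ and some $j$ with $|R_j|\le|R_i|-2$ satisfy $R_j\cap Z_v=\emptyset$ is precisely the heart of the theorem, and you explicitly defer it (``this last step is where essentially all the work lies''). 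Nothing you have written rules out the configuration in which every maximal tight set $Z_v$, $v\in R_i$, meets every small root set $R_j$; the counting facts you list (tightness of $Z_v$, each vertex lying in exactly $k-\rho_A(\{u\})$ of the $R_l$) do not by themselves yield a contradiction, and the promised ``uncrossing argument on the $Z_v$'' is where a genuine idea is still missing. As it stands, you have shown that a certain local-improvement procedure terminates, but not that its individual steps exist.

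For comparison: the paper does not prove Theorem~\ref{THMbra} itself (it is quoted from Schrijver), but it records the intended route and then imitates it for Theorems~\ref{THMmf} and~\ref{THMb}. That route is pairwise: take a partition minimizing an imbalance potential; if $\left||B_i|-|B_j|\right|\ge 2$ for some pair, work only inside the two-branching digraph $(V,B_i\cup B_j)$ and apply the $k=2$ re-rooting lemma (Lemma~\ref{LEMsch}), whose feasibility condition involves only the \emph{source components} of $(V,B_i\cup B_j)$ rather than the full lattice of tight sets of the $k$-tuple. In that setting the rebalanced root sets can be exhibited by hand, since each non-singleton source component contains a root of each of the two branchings and each singleton source component lies in both root sets, so no global uncrossing is needed. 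The cleanest way to repair your argument is to restrict the exchange to one pair of branchings at a time and prove (or invoke) that two-branching lemma; alternatively, you must carry out the deferred uncrossing argument in full.
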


The aim of this paper is to extend of Theorem \ref{THMbra} into 
two generalizations of branchings: 
\emph{matching forests} \cite{Gil82I,Gil82II,Gil82III} and \emph{$b$-branchings} \cite{KKT20}. 
An important feature is that, 
due to their structures, 
defining the equitability of matching forests and $b$-branchings is not a trivial task, 
as explained below.

\subsection{Matching Forests}
\label{SECintromf}

The concept of matching forests was introduced by Giles \cite{Gil82I,Gil82II,Gil82III}. 
A mixed graph $G=(V,E,A)$ consists of 
the set $V$ of vertices, 
the set $E$ of undirected edges, 
and 
the set $A$ of directed edges (arcs). 
We say that 
an undirected edge in $E$ \emph{covers} a vertex $v \in V$ 
if $v$ is one of the endpoints of the undirected edge, 
and 
a directed edge in $A$ \emph{covers} $v$ if $v$ is the head of the directed edge. 
A subset of edges $F \subseteq E \cup A$ is a \emph{matching forest} 
if the underlying edge set of $F$ is a forest 
and 
each vertex is covered by at most one edge in $F$. 
It is straightforward to see that 
matching forests offer a common generalization of matchings in undirected graphs 
and branchings in digraphs: 
if $F \subseteq E \cup A$ is a matching forest, 
then 
$F \cap E$ is a matching 
and 
$F \cap A$ is a branching. 

An equivalent definition of matching forests can be given in the following way. 
For a subset of undirected edges $M \subseteq E$, 
let $\partial M \subseteq V$ denote the set of vertices covered by at least one edge in $M$. 
For a subset of directed edges $B \subseteq A$, 
let $\partial B  \subseteq V$ denote the set of vertices covered by at least one arc in $B$\footnote{We believe that this notation causes no confusion on the direction of the arcs, 
since we never refer to the set of the tails of the arcs in this paper.}. 
For $F \subseteq E \cup A$, 
define $\partial F = \partial(F \cap E) \cup \partial(F \cap A)$. 
That is, $\partial F$ is the set of the vertices covered by $F$. 
Now $F \subseteq E \cup A$ is a matching forest if 
$M:=F\cap E$ is a matching, 
$B:=F\cap A$ is a branching, and 
$\partial M \cap \partial B = \emptyset$. 

Previous work on matching forests 
includes 
polynomial algorithms with polyhedral description \cite{Gil82I,Gil82II,Gil82III}, 
total dual integrality of the description \cite{Sch00MF}, 
a Vizing-type theorem on the number of matching forests partitioning the edge set $E \cup A$ \cite{Kei03}, 
and reduction to linear matroid parity \cite{Sch03}. 
More recently, 
Takazawa \cite{Tak14} showed that 
the sets of vertices covered by the matching forests form a \emph{delta-matroid} \cite{Bou87,CK88,DH86}. 
For two sets $X$ and $Y$, 
let $X \sd Y$ denote their symmetric difference, 
i.e.,\ 
$X \sd Y = (X \setminus Y) \cup (Y \setminus X)$. 
For a finite set $V$ and its subset family $\F \subseteq 2^V$, 
the set system $(V, \F)$ is a \emph{delta-matroid} if it satisfies the following exchange property: 
\begin{quote}
For each $U_1,U_2 \in \F $ and $u \in U_1 \sd U_2$, 
there exists $u' \in U_1\sd U_2$ such that $U_1 \sd \{u,u'\} \in \F$. 
\end{quote}
\begin{theorem}[\cite{Tak14}]
\label{THMdm}
For a mixed graph $G=(V,E,A)$, 
define $\F_{G} \subseteq 2^V$ by
\begin{align*}
\F_{G}=\{\partial F \mid \mbox{$F\subseteq E \cup A$ is a matching forest in $G$}\}. 
\end{align*}
Then, 
the set system $(V, \F_G)$ is a delta-matroid. 
\end{theorem}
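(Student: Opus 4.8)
The plan is to verify the delta-matroid exchange axiom for $(V,\F_G)$ directly, by an alternating-exchange argument between two matching forests that generalizes the well-known fact that the matchable vertex sets of an undirected graph form a delta-matroid.

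So fix matching forests $F_1,F_2$ with $\partial F_1=U_1$ and $\partial F_2=U_2$, write $M_i=F_i\cap E$ and $B_i=F_i\cap A$ (so that $\partial F_i=\partial M_i\sqcup\partial B_i$ by the matching forest condition), and fix $u\in U_1\sd U_2$; by that condition $u$ is covered in exactly one of $F_1,F_2$. Consider the case $u\in U_1\setminus U_2$, where $u$ is covered in $F_1$ by some edge $e$; the case $u\in U_2\setminus U_1$ is handled by the same method, augmenting rather than shrinking the coverage of $F_1$. If $e=(x,u)\in B_1$ is an arc, then $F_1-e$ is already a matching forest covering exactly $U_1\setminus\{u\}$, so the axiom holds with $u'=u$. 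If $e=\{w,u\}\in M_1$ and $w\notin U_2$, then $F_1-e$ covers exactly $U_1\setminus\{w,u\}=U_1\sd\{u,w\}$ while $w\in U_1\sd U_2$, so the axiom holds with $u'=w$. The one genuinely nontrivial case is $e=\{w,u\}\in M_1$ with $w\in U_1\cap U_2$: here one deletes $e$ and must re-cover $w$ without covering $u$, and to do so one follows the edge of $F_2$ that covers $w$, then the edge of $F_1$ that must be removed in response to it, and so on --- that is, one traces a walk from $u$ that alternates between edges of $F_1$ and of $F_2$ and is coverage-consistent at every internal vertex, in the sense that the two walk-edges meeting at such a vertex are exactly the edges covering it in $F_1$ and in $F_2$. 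This walk ends at a vertex $u'$ covered by exactly one of $F_1,F_2$; performing the exchange along it --- deleting its $F_1$-edges from $F_1$ and inserting its $F_2$-edges --- alters $\partial F_1$ precisely on $\{u,u'\}$, and the type of the walk's last edge forces $u'\in U_1\sd U_2$, just as in the undirected matching case.

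The step I expect to be the main obstacle is showing that the set $F$ produced by the exchange is again a matching forest, because two conditions that are vacuous for matchings must be preserved. First, the underlying edge set of $F$ must stay a forest: inserting an edge or arc of $F_2$ can close an undirected cycle, so the alternating walk has to be chosen to avoid this, which is possible because $F_1$ and $F_2$ are individually forests --- one takes the walk inside a single component of the underlying graph of $F_1\cup F_2$ and minimal with the prescribed endpoint behaviour. Second, the cross condition $\partial(F\cap E)\cap\partial(F\cap A)=\emptyset$ must survive; this is precisely what forces the alternating walk to be permitted to switch between an undirected edge and an arc at a vertex, and what one must check at every internal vertex and at $u'$ in order to rule out a vertex being claimed at once by the matching part and the branching part of $F$. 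This last verification is where the bookkeeping in the proof concentrates. Conceptually the statement records that $\F_G$ is the disjoint combination $\{X\cup Y:X\in\{\partial M:M\subseteq E\text{ a matching}\},\ Y\in\{\partial B:B\subseteq A\text{ a branching}\},\ X\cap Y=\emptyset\}$ of the matching delta-matroid of $(V,E)$ and the matroid of branching-coverable vertex sets of $(V,A)$; but arguing that such a combination inherits the exchange axiom reduces to essentially the walk argument above, so we carry it out directly on matching forests.
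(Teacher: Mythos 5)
The paper does not actually prove Theorem~\ref{THMdm}; it quotes it from \cite{Tak14}. Your overall strategy---an alternating exchange between $F_1$ and $F_2$ started at $u$---is the right one, and your two easy cases (an arc covering $u$; a matching edge $\{w,u\}$ with $w\notin U_2$) are correct. The genuine gap is in the hard case, in how the directed parts are exchanged. You let the walk pass through arcs of $B_1$ and $B_2$ and take a symmetric difference along it, arguing that minimality of the walk inside one component of $F_1\cup F_2$ preserves the forest condition. This does not work: branchings are not closed under alternating arc exchanges, and minimality does not repair the failure. Concretely, take $V=\{0,1,2,3\}$, $B_1=\{(1,2),(2,3)\}$, $B_2=\{(3,1)\}$, $M_1=\{\{0,1\}\}$, $M_2=\emptyset$, and $u=0$. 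Your walk reaches $w=1\in U_1\cap U_2$; the $F_2$-edge covering $1$ is the arc $(3,1)$, and adding it to $F_1$ while deleting $\{0,1\}$ creates the directed cycle $\{(1,2),(2,3),(3,1)\}$ inside the new branching, while your coverage rule identifies no ``edge of $F_1$ removed in response,'' since no arc of $B_1$ enters vertex $1$. (The axiom does hold here, with $u'=2$ and $B_1'=\{(2,3),(3,1)\}$, but this $B_1'$ is not obtainable by any local swap along your walk.) The missing ingredient is the source-component condition: after the exchange each new branching must retain a root in every source component of $(V,B_1\cup B_2)$, which is the hypothesis of Edmonds' disjoint branchings theorem (Lemma~\ref{LEMsch}). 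The known proofs therefore never exchange arcs individually: the alternating path lives only in $M_1\cup M_2\cup N$, where $N$ contains one pairing edge per nontrivial source component joining a root of $B_1$ to a root of $B_2$; the path determines new root sets $R_1',R_2'$, and Lemma~\ref{LEMsch} realizes them by actual branchings. This is exactly the machinery of the paper's proof of Theorem~\ref{THMmf} in Section~\ref{SECmf}, and in the example above it is the $N$-edge $\{1,2\}$ that routes the walk to the correct $u'=2$.

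A second, smaller gap: you assert that the type of the walk's last edge forces the terminal vertex $u'$ into $U_1\sd U_2$. In the mixed setting this is false as stated: the path can end at a vertex of $U_1\cap U_2$ (for instance one covered by $M_1$ and by $B_2$ that was not chosen as an endpoint of a pairing edge), in which case the root redistribution re-covers that vertex by the new branching and $\partial F_1$ changes only at $u$, so the axiom holds with $u'=u$ rather than with the walk's endpoint. This endpoint case analysis---deciding at which of the two path ends the covered set actually changes, and verifying membership in $U_1\sd U_2$ when it changes at both---is the delicate part of \cite{Tak14}; it is the analogue of the paper's observation that $||\partial F_1'|-|\partial F_2'||$ drops by either $2$ or $4$, and it needs to be carried out explicitly rather than asserted.
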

Theorem \ref{THMdm} commonly extends matching delta-matroids \cite{Bou89,CK88} 
and matroids defined from branchings (see \cite{MT21,Tak12bibr}),
and provides some explanation of the tractability of matching forests as well as the polyhedral results mentioned above.

The most recent work on matching forests is due to Kir\'{a}ly and Yokoi \cite{KY22}, 
which discusses 
equitable partition into matching forests. 
They 
considered equitability based on the sizes of $F$, $F \cap E$, and $F \cap A$, 
and 
proved the following two theorems. 
Let $\ZZ_{++}$ denote the set of the positive integers. 
For $k \in \ZZ_{++}$, 
let $[k]$ denote the set of positive integers not greater than $k$, 
i.e., 
$[k]=\{1,\ldots, k\}$. 
%%For matching forests $F_1,\ldots, F_k$, 
%%denote 
%%$M_i=F_i \cap E$ and $B_i=F_i \cap A$ for each $i \in [k]$. 

\begin{theorem}[Kir\'{a}ly and Yokoi \cite{KY22}]
\label{THMky1}
Let $G=(V,E,A)$ be a mixed graph and $k\in \ZZ_{++}$. 
If $E \cup A$ can be partitioned into $k$ matching forests, 
then 
$E \cup A$ can be partitioned into $k$ matching forests $F_1,\ldots, F_k$ such that 
\begin{align}
\label{EQky1}
| |F_i| - |F_j| | \le 1, \quad 
| |M_i| - |M_j| | \le 2, \quad \mbox{and} \quad 
| |B_i| - |B_j| | \le 2
\end{align}
for every $i,j\in [k]$, 
where $M_i=F_i \cap E$ and $B_i = F_i \cap A$ for each $i \in [k]$. 
\end{theorem}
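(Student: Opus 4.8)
The plan is to reduce the tricriteria equitability statement to the well-understood theory of matroid intersection and, more specifically, to Edmonds' disjoint branchings machinery together with a classical equitable edge-coloring result for bipartite graphs. The starting point is the observation that in a mixed graph $G=(V,E,A)$, if $E\cup A$ partitions into $k$ matching forests, then in particular $A$ partitions into $k$ branchings and $E$ into $k$ matchings; but these two partitions need not be compatible, so the subtlety is to coordinate them while simultaneously controlling $|F_i|$, $|M_i|$, and $|B_i|$. First I would fix an arbitrary partition into $k$ matching forests and recall Giles' description of matching forests as common independent sets of two matroids (or as bases of the relevant delta-matroid restricted to the edge count), so that the family of partitions of $E\cup A$ into $k$ matching forests can be described via a polyhedral or exchange-graph model.

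Next I would separately equitably balance the arc part and the edge part. For the arcs, Theorem~\ref{THMbra} already gives a partition of $A$ into $k$ branchings $B_1,\dots,B_k$ with each $|B_i|\in\{\lfloor|A|/k\rfloor,\lceil|A|/k\rceil\}$. For the undirected edges, the analogous statement is that a graph whose edge set partitions into $k$ matchings (i.e.\ is $k$-edge-colorable) admits an equitable such partition with each color class of size $\lfloor|E|/k\rfloor$ or $\lceil|E|/k\rceil$ --- this is the classical equitable edge-coloring fact cited in the introduction. The remaining task is to \emph{interleave} these two balanced partitions into genuine matching forests: we must pair up branchings and matchings, $F_i=M_{\sigma(i)}\cup B_{\tau(i)}$ for suitable permutations, repairing overlaps $\partial M \cap \partial B \neq\emptyset$ by local exchanges that do not destroy the size bounds. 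This is where the $\pm 2$ slack on $|M_i|$ and $|B_i|$ (versus $\pm 1$ on $|F_i|$) comes from: each overlap-repair step may move one edge or one arc between two color classes, and one expects to need at most two such moves per class.

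The cleanest route is an exchange-graph / augmenting-path argument in the style of the proof of Theorem~\ref{THMbra}: start from any partition into $k$ matching forests, and as long as two classes $F_i, F_j$ violate one of the three inequalities, find an alternating walk in an auxiliary digraph whose edges encode feasible single-element moves (move an arc from a larger branching to a smaller one along a directed path, or swap an undirected edge along an alternating path, possibly simultaneously swapping a conflicting arc) and push one unit of ``charge'' from $F_i$ to $F_j$. One shows such a walk always exists when the bounds are violated, using the matroid exchange axioms for the branching part and König-type alternating-path arguments for the matching part, and that performing the swap strictly decreases a potential such as $\sum_i |F_i|^2 + \sum_i|M_i|^2 + \sum_i|B_i|^2$. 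Since the potential is integer-valued and bounded below, the process terminates at a partition satisfying \eqref{EQky1}.

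The main obstacle, and the place I expect the real work to lie, is the \emph{simultaneous} control of all three quantities: an exchange that fixes $|B_i|$ versus $|B_j|$ can create a new conflict $\partial M\cap\partial B\neq\emptyset$ inside some $F_\ell$, forcing a compensating edge move that unbalances the $M$'s; so one needs a carefully chosen potential (or a carefully structured two-phase argument --- first balance $F$, then rebalance $M$ and $B$ within the fibers of fixed $F$-sizes) guaranteeing that conflict-repair never undoes previously achieved balance by more than the allotted slack. Quantifying exactly why $2$ suffices on both $|M_i|$ and $|B_i|$, and that it cannot in general be improved to $1$, is the crux; I would handle it by showing that any conflict at a vertex $v$ covered by both an edge of $M$ and an arc of $B$ in the same $F_\ell$ can be resolved by removing whichever of the two has the ``wrong'' parity relative to the target sizes and reinserting it elsewhere via a single augmenting step, incurring one extra unit of imbalance at each end --- hence $|F|$ stays within $1$ while $|M|,|B|$ drift by at most $2$.
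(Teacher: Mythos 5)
This theorem is not proved in the paper at all: it is quoted from Kir\'{a}ly and Yokoi \cite{KY22}, and the author explicitly notes that even the weaker Lemma~\ref{LEMky} (Lemma~4.2 of \cite{KY22}) ``involves some careful case-by-case analysis, requiring ten types of alternating paths.'' So the only thing to assess is whether your sketch would constitute a proof, and it does not. The central claim you need --- that whenever two classes $F_i,F_j$ violate one of the three inequalities there exists an exchange that strictly decreases a potential \emph{without} breaking the matching-forest property or the other two criteria --- is exactly the hard content of the Kir\'{a}ly--Yokoi argument, and your write-up repeatedly defers it (``one expects to need at most two such moves per class,'' ``I would handle it by showing\dots''). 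In particular, the proposed potential $\sum_i|F_i|^2+\sum_i|M_i|^2+\sum_i|B_i|^2$ is not shown to decrease under any concrete exchange: a swap that balances the branching parts can force an edge move that increases the matching term, and nothing in the sketch rules out the process cycling or stalling at a partition still violating \eqref{EQky1}.

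The ``separately balance, then interleave'' plan has a further structural gap. After equitably partitioning $A$ into branchings and $E$ into matchings independently, the pairs $M_{\sigma(i)}\cup B_{\tau(i)}$ are generally not matching forests, and repairing a conflict at a vertex $v\in\partial M\cap\partial B$ is not a local, bounded-cost operation: the displaced edge or arc may not be insertable into any other class without creating new conflicts there, so repairs can cascade, and no argument is given that the total drift per class is at most $2$ (nor that $|F_i|$ stays within $1$, which is the most delicate of the three bounds). For contrast, the one proof of this flavor that the paper does carry out (Theorem~\ref{THMmf}) avoids all of this by working with a \emph{single} criterion $|\partial F|$, taking a partition minimizing $\sum_{i<j}\bigl||\partial F_i|-|\partial F_j|\bigr|$, and exhibiting one explicit alternating path in the auxiliary graph $H=(V,M_1\cup M_2\cup N)$ whose swap is legitimized by Lemma~\ref{LEMsch}; the tricriteria version you are attempting cannot be obtained by that route and genuinely requires the heavier case analysis of \cite{KY22}.
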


\begin{theorem}[Kir\'{a}ly and Yokoi \cite{KY22}]
\label{THMky2}
Let $G=(V,E,A)$ be a mixed graph and $k\in \ZZ_{++}$. 
If $E \cup A$ can be partitioned into $k$ matching forests, 
then 
$E \cup A$ can be partitioned into $k$ matching forests $F_1,\ldots, F_k$ such that 
\begin{align}
\label{EQky2}
| |F_i| - |F_j| | \le 2, \quad 
| |M_i| - |M_j| | \le 1, \quad \mbox{and} \quad 
| |B_i| - |B_j| | \le 2
\end{align}
for every $i,j\in [k]$, 
where $M_i=F_i \cap E$ and $B_i = F_i \cap A$ for each $i \in [k]$. 
\end{theorem}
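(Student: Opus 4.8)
\emph{Proof proposal.}
The plan is to reach \eqref{EQky2} by repeated local exchanges between pairs of classes, in the spirit of the classical proofs of equitable edge-colouring and of Theorem~\ref{THMbra}. Start from an arbitrary partition of $E\cup A$ into $k$ matching forests $F_1,\dots,F_k$, with $M_i=F_i\cap E$ and $B_i=F_i\cap A$, and attach to it the lexicographic potential $\bigl(\sum_i|M_i|^2,\ \sum_i|F_i|^2\bigr)$, whose first coordinate drives the matchings toward balance and whose second drives the whole forests. The statement then reduces to a two-class exchange lemma: \emph{if $F',F''$ are disjoint matching forests with $M'=F'\cap E$, $M''=F''\cap E$ and $|M'|\ge|M''|+2$, then $F'\cup F''$ can be re-partitioned into disjoint matching forests $\widehat F',\widehat F''$ with $|\widehat F'\cap E|=|M'|-1$, $|\widehat F''\cap E|=|M''|+1$ and $\bigl||\widehat F'|-|F'|\bigr|\le1$}; together with a companion lemma that, holding every $|M_i|$ fixed, rebalances two branchings whose sizes differ by at least two while changing each $|F_i|$ by at most one.

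To prove the exchange lemma I would run an alternating walk in the mixed multigraph $H=(V,F'\cup F'')$ whose elements are two-coloured by membership in $F'$ versus $F''$; recall that each vertex is covered by at most one element of each colour and that each colour class spans a forest. Pick an edge $e\in M'$ and recolour it to the colour of $F''$, resolving the two endpoints of $e$ in turn. This can destroy the matching-forest property of the second class only if (a) an endpoint of $e$ is already covered by a colour-$F''$ element $g$, or (b) $e$ closes a cycle in the forest $F''\cap E$. In case (a) recolour $g$ into $F'$ and repeat the analysis at the newly exposed vertex of $g$ (its tail if $g\in A$, its far endpoint if $g\in E$); in case (b) flip colours along the unique cycle created. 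Since each colour class spans a forest, the walk cannot revisit a vertex in the same role, so it terminates — at a vertex not covered by the class currently being enlarged, or by a neutral cycle flip. Along the way the vertex set covered by each class is unchanged except at the two endpoints, so both classes stay matching forests; the matching part of one class loses the initial edge and the other gains it, while the arc counts shift by at most one. The companion lemma is the matching-forest refinement of Edmonds' disjoint-branchings exchange \cite{Edm73}, run on arcs alone but forbidding any move that would place an arc-head on a vertex of the corresponding $\partial M_i$; that extra constraint is exactly what can force an accompanying edge swap and hence the loss of one unit in the $|F_i|$.

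The main obstacle is precisely the exchange lemma: recolouring an arc moves a tree-edge of one forest and simultaneously changes its head's cover pattern, so acyclicity and the at-most-one-cover condition are entangled, and one must verify both that the walk cannot cycle and that its two endpoints are the \emph{only} vertices where the cover pattern of either class changes — otherwise the size accounting that yields the constants $1$ and $2$ in \eqref{EQky2} collapses. Granting the two lemmas, the rest is routine: minimising $\sum_i|M_i|^2$ forces $\bigl||M_i|-|M_j|\bigr|\le1$ (an exchange strictly decreases it whenever some pair differs by at least two), then minimising $\sum_i|F_i|^2$ within that class via the companion lemma gives $\bigl||F_i|-|F_j|\bigr|\le2$, and $\bigl||B_i|-|B_j|\bigr|\le2$ follows from $|B_i|=|F_i|-|M_i|$. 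A leaner alternative, closer to Kir\'{a}ly and Yokoi's treatment \cite{KY22}, is to isolate a single ``simultaneous exchange'' lemma that rebalances any prescribed one of $|F|$, $|M|$, $|B|$ by one unit while perturbing the other two by at most one, and to read off Theorems~\ref{THMky1} and~\ref{THMky2} (and the covered-vertex version) from it.
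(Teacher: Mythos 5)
First, note that the paper does not prove Theorem~\ref{THMky2}: it is quoted from Kir\'{a}ly and Yokoi \cite{KY22}, so there is no in-paper proof to compare against. Judged on its own, your proposal has genuine gaps and does not constitute a proof. The central difficulty you yourself flag --- the two-class exchange lemma --- is left unproved, and the alternating-walk argument you sketch for it is unsound as stated: when an arc $g$ is recoloured from $F''$ to $F'$, resolving cover conflicts at heads is not enough, because the arc part of a matching forest must be a branching and hence acyclic; adding $g$ to $B'$ can create a directed cycle whose vertices each remain covered exactly once, so no local cover conflict is ever detected and the walk ``terminates'' at something that is not a matching forest. This is exactly why the paper's own exchange argument for the related Theorem~\ref{THMmf} never moves arcs one at a time: it swaps only undirected edges along a path and then re-splits the \emph{entire} arc set $B_1\cup B_2$ globally via Lemma~\ref{LEMsch} (derived from Edmonds' disjoint branchings theorem), prescribing only the root sets. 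Any correct proof along your lines would need the same global re-splitting device for the arc part.

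Second, the bookkeeping does not close. Your companion lemma is internally inconsistent: it is supposed to hold every $|M_i|$ fixed, yet you concede that the constraint $\partial M_i\cap\partial B_i=\emptyset$ ``can force an accompanying edge swap,'' which changes individual $|M_i|$ values and can therefore \emph{increase} the first coordinate $\sum_i|M_i|^2$ of your lexicographic potential, destroying the termination and optimality argument for the two-phase scheme. Finally, even granting both lemmas, the bounds $\left||M_i|-|M_j|\right|\le 1$ and $\left||F_i|-|F_j|\right|\le 2$ only yield $\left||B_i|-|B_j|\right|\le 3$ by the triangle inequality, not the claimed $\le 2$; the three constants in \eqref{EQky2} are tight and coupled, which is why Kir\'{a}ly and Yokoi design a single simultaneous exchange controlling all three quantities at once, with extensive case analysis (the paper notes that even the analogous Lemma~\ref{LEMky} requires ten types of alternating paths), rather than rebalancing the matchings first and the forests second.
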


Moreover, 
Kir\'{a}ly and Yokoi \cite{KY22} showed that 
both of Theorems \ref{THMky1} and \ref{THMky2} are best possible 
with this tricriteria equitability, 
by presenting examples in which \eqref{EQky1} and \eqref{EQky2} cannot be improved. 
That is, 
while attaining the best possible results, 
Theorems \ref{THMky1} and \ref{THMky2} mean that 
these three criteria cannot be optimized at the same time, 
which demonstrates 
a sort of obscurity of matching forests. 

In this paper, 
%%the first contribution of this paper is 
%%a new theorem on equitable partitions into matching forests. 
%%For this theorem, 
we introduce a new equitability, 
which builds upon a single criterion defined by 
%%the number $|\partial F|$ of the covered vertices. 
the number of the covered vertices. 
Namely, 
our equitable-partition theorem is described as follows. 

\begin{theorem}
\label{THMmf}
Let $G=(V,E,A)$ be a mixed graph and $k \in \ZZ_{++}$. 
If $E \cup A$ can be partitioned into $k$ matching forests, 
then 
$E \cup A$ can be partitioned into $k$ matching forests $F_1,\ldots, F_k$ such that 
\begin{align}
\label{EQmf}
%%\left| (2|F_i \cap E| + |F_i \cap A|) -  (2|F_j \cap E| + |F_j \cap A|) \right| \le 2
\left|\left| \partial F_i \right| -  \left| \partial F_j \right|\right| \le 2
\end{align}
for every $i,j\in [k]$. 
\end{theorem}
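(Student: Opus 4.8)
The plan is to reduce Theorem~\ref{THMmf} to the delta-matroid structure of Theorem~\ref{THMdm} combined with an exchange argument familiar from the proof of Theorem~\ref{THMbra}. Start with an arbitrary partition $E\cup A=F_1\cup\cdots\cup F_k$ into matching forests, and let $n_i=|\partial F_i|$. Note first that $\sum_i n_i$ is \emph{not} fixed across partitions (unlike $\sum_i |F_i|=|E\cup A|$), since a vertex may be covered by several of the $F_i$'s or by none; so the target cannot be ``make all $n_i$ equal to $\lfloor\cdot\rfloor$ or $\lceil\cdot\rceil$ of a fixed average.'' Instead I would argue directly: as long as there exist indices $p,q$ with $n_p\ge n_q+3$, I will modify the partition so that the potential $\Phi=\sum_i n_i^2$ strictly decreases while each $F_i$ stays a matching forest and no $n_i$ with $n_i\le n_q$ decreases below... — more precisely, so that $\Phi$ strictly decreases; since $\Phi$ is a nonnegative integer bounded by $k|V|^2$, this process terminates, and at termination \eqref{EQmf} holds.

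The core step is the local exchange. Given $F_p, F_q$ with $|\partial F_p|\ge|\partial F_q|+3$, consider the mixed subgraph spanned by $F_p\cup F_q$ and look for a single edge to move from $F_p$ to $F_q$ (or, if necessary, an alternating exchange along a path/cycle in $F_p\cup F_q$) that decreases $|\partial F_p|$ by at most one, increases $|\partial F_q|$ by at most one, and lowers $\Phi$. Here the delta-matroid exchange property of Theorem~\ref{THMdm} is the key tool: applied to $U_1=\partial F_q$ and $U_2=\partial(F_p\cup F_q\cap\text{some submatchingforest})$... — concretely, I would use it in the form that if $F$ and $F'$ are matching forests then for any $u\in\partial F'\setminus\partial F$ there is a matching forest $F''$ with $\partial F''=\partial F\sd\{u,u'\}$ for some $u'$. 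Combined with the ``forest'' side of the structure, this lets one transplant a covered vertex from $F_p$ to $F_q$ one at a time. The honest difficulty is that moving a covered vertex from $\partial F_p$ to $\partial F_q$ at the level of \emph{edges} may force moving an edge that covers two vertices at once (an undirected edge, or when a branching root changes), so a naive single-edge move can change $|\partial F_p|$ by $2$; controlling this — showing one can always route the exchange so the net change in $\Phi$ is negative — is the main obstacle, and is precisely where the gap ``$\le 2$'' rather than ``$\le 1$'' in \eqref{EQmf} comes from.

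I would organize the write-up as follows. First, restate the delta-matroid exchange property in the symmetric-exchange form I need (one covered vertex in, one out), noting it follows from Theorem~\ref{THMdm} by the standard equivalence of exchange axioms for delta-matroids. Second, fix the potential $\Phi=\sum_{i\in[k]}|\partial F_i|^2$ and suppose for contradiction that $|\partial F_p|-|\partial F_q|\ge 3$ for some $p,q$ in a partition minimizing $\Phi$. Third, carry out the exchange: using the forest structure of $F_p$, pick a leaf-type edge of $F_p$ whose removal drops $|\partial F_p|$ by exactly one (or, failing that, by two, and show even then $\Phi$ strictly decreases because the gap is $\ge 3$), reinsert/rearrange within $F_q$ using the delta-matroid property so that $F_q$ remains a matching forest and $|\partial F_q|$ rises by at most one, and verify the underlying-forest condition is preserved — possibly after deleting a redundant edge of $F_q$, which only helps. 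Fourth, check the arithmetic $ (n_p^2+n_q^2) - ((n_p-1)^2+(n_q+1)^2) = 2(n_p - n_q - 1) \ge 4 > 0$ (and the analogous inequality in the two-vertex case), contradicting minimality of $\Phi$. Finally, conclude that in a $\Phi$-minimal partition \eqref{EQmf} holds, which completes the proof; I would also remark that this recovers, and is consistent with, the relevant lemma of Kir\'{a}ly and Yokoi~\cite{KY22} but avoids their tricriteria machinery.
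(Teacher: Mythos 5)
Your proposal has a genuine gap at its central step. The delta-matroid exchange property of Theorem~\ref{THMdm} asserts only that for matching forests $F,F'$ and $u\in\partial F'\sd\partial F$ there is \emph{some} matching forest $F''$ in $G$ with $\partial F''=\partial F\sd\{u,u'\}$; it gives no control whatsoever over the edge set of $F''$. In particular it does not guarantee $F''\subseteq F_p\cup F_q$, nor --- and this is the crux --- that the remaining edges $(F_p\cup F_q)\setminus F''$ again form a matching forest. Since the $F_i$ must continue to \emph{partition} $E\cup A$, the exchange has to be performed simultaneously on both $F_p$ and $F_q$, and the delta-matroid axiom simply does not provide this. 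You flag this yourself (``controlling this \ldots is the main obstacle''), but that obstacle is exactly the content of the theorem, so the sketch does not amount to a proof; the potential-function bookkeeping and the arithmetic $2(n_p-n_q-1)\ge 4$ are the easy part.

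For comparison, the paper closes precisely this gap with two ingredients absent from your sketch. On the directed side it invokes Lemma~\ref{LEMsch} (derived from Edmonds' disjoint branchings theorem): after prescribing new root sets $R_1',R_2'$ with the same union and intersection as $R(B_1),R(B_2)$ and meeting every source component of $(V,B_1\cup B_2)$, the arc set $B_1\cup B_2$ can be \emph{repartitioned} into two branchings with exactly those roots. On the undirected side it builds an auxiliary graph $H=(V,M_1\cup M_2\cup N)$, where $N$ contains one edge per nontrivial source component joining a vertex of $R(B_1)$ to one of $R(B_2)$; $H$ has maximum degree two, every vertex of $\partial F_1\sd\partial F_2$ is an endpoint of a path of $H$, and swapping along one suitable path $P$ simultaneously exchanges the matchings ($M_i'=M_i\sd E(P)$) and the branching root sets so that $R(M_i')\cup R(B_i')=V$ is preserved. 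If you wish to salvage your route, you would need an exchange statement for \emph{pairs} of disjoint matching forests partitioning a common edge set, which is in substance Lemma~\ref{LEMsch} together with the alternating-path construction --- not Theorem~\ref{THMdm}.
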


We remark that this criterion of equitability is plausible 
in the light of the delta-matroid structure of matching forests (Theorem \ref{THMdm}), 
focusing on the covered vertices rather than the edge set. 
Theorem \ref{THMmf} contrasts with Theorems \ref{THMky1} and \ref{THMky2} in that 
the value two in the right-hand side of \eqref{EQmf} is tight: 
consider the case where $G=(V,E,A)$ consists of an odd number of undirected edges forming a path 
and no directed edges, 
and $k=2$. 

We also remark that Theorem \ref{THMmf}
can indeed be derived from Lemma 4.2 in \cite{KY22}, 
as shown in \ref{APPky}. 
However, 
the proof of this lemma involves some careful case-by-case analysis, 
requiring ten types of alternating paths. 
In contrast to this, 
we present a direct and simpler proof for Theorem \ref{THMmf}, 
including only one type of alternating paths, 
by extending the argument in the proof for 
the exchangeability of matching forests by Schrijver \cite[Theorem 2]{Sch00MF}. 
Schrijver used this property to prove the total dual integrality of the linear system 
describing the matching forest polytope presented by Giles \cite{Gil82II}, 
and the delta-matroid structure of matching forests \cite{Tak14} is also derived 
from an in-depth analysis of the proof for the exchangeability. 

\subsection{$b$-branchings}

We next address equitable partition of a digraph into \emph{$b$-branchings}, 
introduced by Kakimura, Kamiyama, 
and Takazawa \cite{KKT20}. 
Let $\ZZ_{++}^V$ denote the set of the $|V|$-dimensional vectors 
of which 
each coordinate corresponds to an element in $V$
and 
each component is a positive integer. 
Let $D=(V,A)$ be a digraph and 
let $b\in \ZZ_{++}^V$. 
For $X \subseteq V$, 
we denote $b(X) = \sum_{v \in X}b(v)$. 
For 
$F \subseteq A$ and $X \subseteq V$, 
let $F[X]$ denote the set of arcs in $F$ induced by $X$. 
For 
$F \subseteq A$ and $v \in V$, 
let $d^-_{F}(v)$ denote the indegree of $v$ in the subgraph $(V,F)$, 
i.e.,\ the number of arcs in $F$ whose head is $v$. 
Now 
an arc set $B \subseteq A$ is a \emph{$b$-branching} if 
\begin{alignat}{2}
\label{EQindeg}
&{}d^-_B(v) \le b(v) \quad {}&{}{}&{}\mbox{for each $v \in V$, and} \\
\label{EQsparse}
&{}|B[X]| \le b(X) - 1 \quad {}&{}{}&{}\mbox{for each nonempty subset $X \subseteq V$}. 
\end{alignat}
Note that the branchings is a special case of $b$-branchings where $b(v)=1$ for every $v \in V$. 
That is, 
$b$-branchings provide a generalization of branchings 
in which the indegree bound of each vertex $v \in V$ can be an arbitrary positive integer $b(v)$ (Condition \eqref{EQindeg}). 
Together with Condition \eqref{EQsparse}, 
it yields a reasonable generalization of branchings admitting extensions of  
several fundamental results on branchings, 
such as a multi-phase greedy algorithm \cite{Boc71,CL65,Edm67,Ful74}, 
a packing theorem \cite{Edm73}, 
and the integer decomposition property of the corresponding polytope \cite{BT81}. 

The packing theorem on $b$-branchings leads to 
a necessary and sufficient condition for the arc set $A$ to be partitionable into $k$ $b$-branchings \cite{KKT20}. 
In this paper, 
we prove that 
%%equitable partition into $k$ $b$-branchings is possible 
an equitable partition into $k$ $b$-branchings always exists,  
provided that any partition into $k$ $b$-branchings exists. 

\begin{theorem}
\label{THMb}
Let $D=(V,A)$ be a digraph, 
$b\in \ZZ_{++}^V$, 
and $k\in \ZZ_{++}$. 
If $A$ can be partitioned into $k$ $b$-branchings, 
then 
$A$ can be partitioned into $k$ $b$-branchings 
%%each of which is of size $\lfloor |A|/k \rfloor$ or $\lceil |A|/k \rceil$. 
$B_1,\ldots, B_k$ satisfying the following: 
\begin{enumerate}
\item
	\label{CONsize}
	for each $i=1,\ldots , k$, 
	the size $|B_i|$ is $\lfloor |A|/k \rfloor$ or $\lceil |A|/k \rceil$; and 
	%%$\left| |B_i| - |B_j| \right| \le 1$ for each $i,j \in [k]$; and 
\item
	\label{CONdegree}
	for each $i=1,\ldots , k$, 
	the indegree $d^-_{B_i}(v)$ of each vertex $v \in V$ is 
	$\lfloor d^-_A(v)/k \rfloor$ or $\lceil d^-_A(v)/k \rceil$. 
	%%$\left|d^-_{B_i}(v) - d^-_{B_j}(v) \right| \le 1$ for each $i,j \in [k]$ and each $v \in V$. 
\end{enumerate}
\end{theorem}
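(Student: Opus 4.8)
The plan is to mimic the classical exchange argument behind Theorem \ref{THMbra} but run it with respect to a refined potential function that simultaneously controls the global size and all the local indegrees. Starting from an arbitrary partition $A = B_1 \cup \cdots \cup B_k$ into $k$ $b$-branchings, I would define a potential such as $\Phi = \sum_{i} |B_i|^2 + \sum_{v \in V} \sum_{i} d^-_{B_i}(v)^2$ (or the analogous sum of binomial coefficients $\binom{|B_i|}{2}$ and $\binom{d^-_{B_i}(v)}{2}$), and among all partitions into $k$ $b$-branchings pick one minimizing $\Phi$. A standard convexity observation shows that $\Phi$ is minimized exactly when every $|B_i|$ lies in $\{\lfloor |A|/k\rfloor, \lceil |A|/k\rceil\}$ and, for every vertex $v$, every $d^-_{B_i}(v)$ lies in $\{\lfloor d^-_A(v)/k\rfloor, \lceil d^-_A(v)/k\rceil\}$; conversely, if either condition \ref{CONsize} or \ref{CONdegree} fails then $\Phi$ is not minimal. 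So it suffices to show that any partition failing to satisfy \ref{CONsize} or \ref{CONdegree} admits a modification — moving a single arc, or re-routing along an alternating path — that strictly decreases $\Phi$ while keeping all $k$ parts $b$-branchings.

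The first step is to handle the local (indegree) imbalance. If some vertex $v$ has $d^-_{B_p}(v) \ge d^-_{B_q}(v) + 2$ for two indices $p,q$, I would look for an arc $a \in B_p$ with head $v$ that can be moved to $B_q$. Moving $a$ trivially preserves \eqref{EQindeg} for $B_q$ (since $d^-_{B_q}(v)$ only needs to stay $\le b(v)$, and $d^-_{B_q}(v)+1 \le d^-_{B_p}(v) \le b(v)$), and it preserves \eqref{EQindeg} for $B_p$; the only obstruction is the sparsity condition \eqref{EQsparse} for $B_q \cup \{a\}$, i.e.\ the existence of a tight set $X$ with $v \in X$, $|B_q[X]| = b(X)-1$, and both endpoints of $a$ in $X$. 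When such a violation occurs, I would swap in the usual way: there is then an arc $a' \in B_q[X]$ with head some vertex $v'$, exchange $a$ and $a'$ between $B_p$ and $B_q$, and iterate. This is exactly the mechanism used in the proof of the packing theorem for $b$-branchings in \cite{KKT20}; the key point is that each swap keeps $|B_p|$ and $|B_q|$ fixed, does not increase $d^-(\cdot)$ at any vertex for any part, and makes progress toward reducing $d^-_{B_p}(v)$, so after finitely many swaps either the move succeeds (strictly decreasing $\Phi$ via the $v$-term) or we reach a contradiction with the assumed imbalance.

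The second step is the global size imbalance, once all indegrees are balanced. If $|B_p| \ge |B_q| + 2$ while $d^-_{B_p}(v) \in \{d^-_{B_q}(v), d^-_{B_q}(v)\pm 1\}$ for all $v$ (indegrees already tight), then $\sum_v d^-_{B_p}(v) = |B_p| > |B_q| = \sum_v d^-_{B_q}(v)$ forces some vertex $v$ with $d^-_{B_p}(v) = d^-_{B_q}(v)+1$; I would again try to move an arc $a \in B_p$ with head $v$ into $B_q$, running the same alternating-path swap to restore sparsity. Here one must be slightly careful that the swaps used to fix sparsity do not destroy the indegree balance achieved in step one — but each elementary swap exchanges one arc of $B_p$ for one arc of $B_q$ with the \emph{same head} only when that head is inside the tight set, and in general changes two indegree coordinates in opposite directions; choosing the swap arc $a'$ to have head already satisfying $d^-_{B_q}(v') > d^-_{B_p}(v')$ (or at least not below) keeps things monotone, so the combined potential $\Phi$ still strictly decreases. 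I expect this interaction between the size-fixing and indegree-fixing exchanges to be the main obstacle: the argument must be organized so that a single monotone quantity ($\Phi$, together with a secondary count of sparsity violations) decreases at every elementary step, guaranteeing termination at a partition satisfying both \ref{CONsize} and \ref{CONdegree}. Once termination is established, the minimizer of $\Phi$ is the desired equitable partition and the theorem follows.
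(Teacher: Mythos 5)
Your overall skeleton---take a partition extremal with respect to a potential, and show that any violation of \ref{CONsize} or \ref{CONdegree} yields a strictly improving exchange between two parts---is the same as the paper's. But there are two genuine gaps in how you carry it out. First, your potential $\Phi=\sum_i|B_i|^2+\sum_v\sum_i d^-_{B_i}(v)^2$ does not necessarily decrease under the moves you propose. Moving a single arc with head $v$ from $B_p$ to $B_q$ to repair an indegree imbalance at $v$ also changes $|B_p|$ and $|B_q|$; if $|B_q|\ge|B_p|$ the size terms grow, and the growth can outweigh the gain in the $v$-terms (e.g.\ $d^-_{B_p}(v)=2$, $d^-_{B_q}(v)=0$, $|B_p|=5$, $|B_q|=7$ gives a net increase of $4$). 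The paper avoids this by never moving a single arc: it replaces the pair $(B_1,B_2)$ by a pair $(B_1',B_2')$ with $B_1'\cup B_2'=B_1\cup B_2$ that is balanced \emph{simultaneously} in size and in every indegree coordinate, so that by convexity no term of the potential increases and at least one strictly decreases. (It also uses distance to the interval $[\lfloor\cdot\rfloor,\lceil\cdot\rceil]$ rather than squares, but that is cosmetic; the essential point is the simultaneous rebalancing of the whole pair.)

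Second, the existence of such a rebalanced pair is the real technical content, and your ``move an arc, repair sparsity violations by swapping, and iterate'' sketch does not establish it: you do not specify which arc $a'$ of the tight set to swap out, the swap \emph{does} increase $d^-_{B_p}(v')$ (contrary to your claim that no indegree increases), it can create new violations of \eqref{EQsparse} in $B_p$ for sets containing both ends of $a'$ but not of $a$, and no termination argument is given. The paper instead invokes a repartitioning lemma (Lemma \ref{LEMbpartition}, from a companion paper) stating that for two $b$-branchings partitioning $A$, \emph{any} prescribed indegree vectors $b_1',b_2'$ with $b_1'+b_2'=d^-_A$, $b_1',b_2'\le b$ are realizable by a repartition if and only if $b_1'(X)<b(X)$ and $b_2'(X)<b(X)$ for every source component $X$. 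This reduces the exchange step to a purely numerical construction of $b_1',b_2'$ --- which still requires care: one must split $b_1(v)+b_2(v)$ evenly at each vertex \emph{and} control the parity of the sums over source components so that the strict inequalities $b_i'(X)<b(X)$ survive, a constraint your proposal does not address at all. Without either proving a statement of the strength of Lemma \ref{LEMbpartition} or a correct termination argument for your swap procedure, the proof is incomplete at its central step.
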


When $b(v)=1$ for every $v \in V$, 
Theorem \ref{THMb} exactly coincides with Theorem \ref{THMbra}. 
A new feature is that 
our definition of equitability of $b$-branchings is twofold: 
the number of arcs in any two $b$-branchings differ at most one (Condition \ref{CONsize}); 
and  
the indegrees of each vertex with respect to any two $b$-branchings differ at most one (Condition \ref{CONdegree}). 
Theorem \ref{THMb} means that 
the optimality of these $|V|+1$ criteria can be attained at the same time, 
which suggests some good structure of $b$-branchings. 

One consequence of Theorem \ref{THMb} is the integer decomposition property of 
the convex hull of $b$-branchings of fixed size and indegrees. 
For a polytope $P$ and a positive integer $\kappa$, 
define $\kappa P = \{x \mid \exists \mbox{$x' \in P$, $x = \kappa x'$}\}$. 
A polytope $P$ has the \emph{integer decomposition property} if, 
for every $\kappa \in \ZZ_{++}$ and every integer vector $x \in \kappa P $, 
there exist $\kappa$ integer vectors $x_1,\ldots, x_{\kappa}$ such that 
$x = x_1+\cdots+x_{\kappa}$. 

For branchings, 
Baum and Trotter \cite{BT81} showed 
that the branching polytope has 
the integer decomposition property. 
Moreover, 
McDiarmid \cite{McD83} proved the integer decomposition property of the convex hull of branchings of fixed size $\ell$. 
For $b$-branchings, 
the integer decomposition property of the $b$-branching polytope is proved in \cite{KKT20}: 
\begin{theorem}[Kakimura, Kamiyama and Takazawa \cite{KKT20}]
\label{THMidp}
Let $D=(V,A)$ be a digraph and 
$b \in \ZZ_{++}^V$. 
Then, 
the $b$-branching polytope has the integer decomposition property. 
\end{theorem}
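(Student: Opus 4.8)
The plan is to adapt the argument of Baum and Trotter \cite{BT81} for the ordinary branching polytope, replacing Edmonds' disjoint branchings theorem by the packing theorem for $b$-branchings established in \cite{KKT20}, and then to cope with the one genuinely new difficulty, which arises at vertices $v$ with $b(v)\ge 2$.

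First I would use the polyhedral description of the $b$-branching polytope $P\subseteq\RR^A$ from \cite{KKT20}, namely $P=\{x\in\RR^A : 0\le x(a)\le 1\ (a\in A),\ d^-_x(v)\le b(v)\ (v\in V),\ x(A[X])\le b(X)-1\ (\emptyset\ne X\subseteq V)\}$, where $d^-_x(v)$ and $x(A[X])$ denote the linear extensions to $x\in\RR^A$ of the quantities $d^-_F(v)$ and $|F[X]|$ appearing in \eqref{EQindeg}--\eqref{EQsparse}. Since $P\subseteq[0,1]^A$, the integer points of $P$ are exactly the characteristic vectors of $b$-branchings. Fix $\kappa\in\ZZ_{++}$ and an integer vector $x\in\kappa P$; we must find $b$-branchings $B_1,\dots,B_\kappa$ with $\sum_{i=1}^\kappa\chi^{B_i}=x$. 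From $x/\kappa\in P$ we read off $0\le x(a)\le\kappa$, $d^-_x(v)\le\kappa b(v)$, and $x(A[X])\le\kappa(b(X)-1)$. I would then form the digraph $D'$ obtained from $D$ by replacing each arc $a$ by $x(a)$ parallel copies; the last two inequalities say exactly that $d^-_{D'}(v)\le\kappa b(v)$ for every $v$ and $|D'[X]|\le\kappa(b(X)-1)$ for every nonempty $X$, which is the hypothesis of the $b$-branching packing theorem of \cite{KKT20} with parameter $\kappa$. Hence the arc set of $D'$ partitions into $\kappa$ $b$-branchings $B'_1,\dots,B'_\kappa$. If $b\equiv 1$ this already finishes the proof, exactly as in Baum--Trotter: no $B'_i$ can contain two copies of a common arc (this would force indegree $2$ at its head), so $B'_1,\dots,B'_\kappa$ correspond to subsets of $A$, and these are the required branchings.

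The hard part is the new phenomenon when $b(v)\ge 2$ for some $v$: a class $B'_i$ may contain two parallel copies of an arc $a$ of $D$ (consistent with \eqref{EQindeg} and \eqref{EQsparse} as soon as $b$ at the head of $a$ is at least $2$), and then collapsing $B'_i$ to $A$ yields a vector with a coordinate equal to $2$, which does not lie in $P$. To remove such conflicts I would take, among all partitions of the arc set of $D'$ into $\kappa$ $b$-branchings, one minimizing $\sum_i\sum_{a\in A}\binom{m_i(a)}{2}$, where $m_i(a)$ is the number of copies of $a$ in $B'_i$. If this quantity is positive, then $m_i(a)\ge 2$ for some $i$ and $a$, and since $\sum_i m_i(a)=x(a)\le\kappa$ there is, by the pigeonhole principle, a class $B'_j$ with $m_j(a)=0$; I would move one copy of $a$ from $B'_i$ to $B'_j$ and then repair, by a short alternating exchange of arcs between $B'_i$ and $B'_j$, any violation of \eqref{EQindeg} or \eqref{EQsparse} created at the head of $a$, in such a way that $\sum_i\sum_{a\in A}\binom{m_i(a)}{2}$ strictly decreases. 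When this quantity reaches $0$, each $B'_i$ uses every arc of $D$ at most once, so it corresponds to a $b$-branching $B_i$ of $D$ with $\sum_{i=1}^\kappa\chi^{B_i}=x$, as desired. I expect the main obstacle to be carrying out this last step rigorously, that is, controlling \eqref{EQindeg} and \eqref{EQsparse} at once while an arc is moved between two $b$-branchings; this is exactly the point at which the structural exchange results for $b$-branchings from \cite{KKT20} are needed. An essentially equivalent alternative is to induct on $\kappa$ and peel off, at each step, a single $b$-branching $B$ with $\{a : x(a)=\kappa\}\subseteq B\subseteq\suppp(x)$ (the set $\{a : x(a)=\kappa\}$ is itself a $b$-branching, since $\kappa$ times its characteristic vector is dominated by $x$ and $P$ is down-closed), satisfying $d^-_B(v)\ge d^-_x(v)-(\kappa-1)b(v)$ for all $v$ and $|B[X]|\ge x(A[X])-(\kappa-1)(b(X)-1)$ for all nonempty $X$; each of these lower bounds is at most the corresponding $b$-branching upper bound precisely because $x\in\kappa P$, so that $x-\chi^B\in(\kappa-1)P$ and the inductive hypothesis applies — but again, the crux is to prove that such a $B$ exists.
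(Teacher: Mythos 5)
This theorem is quoted from \cite{KKT20}; the present paper contains no proof of it, so there is nothing in-paper to compare your attempt against, and I can only judge it on its own terms. Your opening move is the right one and is the standard Baum--Trotter reduction: pass to the multidigraph $D'$ with $x(a)$ parallel copies of each arc, check that $d^-_{D'}(v)\le \kappa\, b(v)$ and $|A'[X]|\le \kappa(b(X)-1)$ follow from $x\in\kappa P$, and invoke the packing theorem of \cite{KKT20} to split $A'$ into $\kappa$ $b$-branchings. You also correctly isolate the one place where the $b\equiv 1$ argument breaks: when $b$ is at least $2$ at the head of $a$, a single part may contain two parallel copies of $a$, and then it does not project to a $0$--$1$ vector in $P$.

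The problem is that this is exactly where your proof stops being a proof. The ``repair by a short alternating exchange so that $\sum_i\sum_a\binom{m_i(a)}{2}$ strictly decreases'' is asserted, not established: after moving a copy of $a$ from $B'_i$ to $B'_j$ you may violate \eqref{EQindeg} at the head of $a$ in $B'_j$ or \eqref{EQsparse} for some set $X$ that is tight for $B'_j$, and any arc you send back from $B'_j$ to $B'_i$ to compensate can create a new parallel pair in $B'_i$ or a new tight-set violation there, so it is not clear the potential decreases, nor even that the exchange terminates in a pair of feasible $b$-branchings. Your inductive alternative has the same status: the existence of a single $b$-branching $B$ with $\{a: x(a)=\kappa\}\subseteq B\subseteq\{a: x(a)\ge 1\}$ satisfying the required lower bounds on $d^-_B(v)$ and $|B[X]|$ is essentially equivalent to the theorem itself, and you explicitly leave it unproved. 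So the central step is a genuine gap. One way to close it that fits the spirit of this paper would be to strengthen the packing step so that the $\kappa$ parts are equitable with respect to each parallel class (each class of size $x(a)\le\kappa$ meets each part at most once), in analogy with the indegree condition of Theorem \ref{THMb}; but that refinement is itself a statement requiring proof, presumably via the splitting lemma (Lemma \ref{LEMbpartition}) or the machinery of \cite{KKT20}.
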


In this paper, 
we derive the integer decomposition property of the convex hull of 
$b$-branchings of fixed size and indegrees 
from Theorems \ref{THMb} and \ref{THMidp}. 
Let $\ZZ_{+}$ denote the set of nonnegative integers, 
and 
define $\ZZ_{+}^{V'}$ for $V' \subseteq V$ in a similar way to the definition of $\ZZ_{++}^V$. 

\begin{theorem}
\label{THMidpb2}
Let $D=(V,A)$ be a digraph, 
$b \in \ZZ_{++}^V$, 
and 
$\ell\in \ZZ_+$.  
For $V' \subseteq V$, 
let 
$b' \in \ZZ_{+}^{V'}$ satisfy $b'(v) \le b(v)$ for every $v \in V'$. 
Then, 
the convex hull of incidence vectors of the $b$-branchings 
satisfying the following conditions 
has the integer decomposition property: 
\begin{enumerate}
\item 
\label{ENUsize}
the size is $\ell$; and 
\item 
\label{ENUindeg}
the indegree of each vertex $v \in V'$ is $b'(v)$. 
\end{enumerate}
\end{theorem}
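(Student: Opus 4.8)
The plan is to deduce Theorem~\ref{THMidpb2} from Theorems~\ref{THMb} and \ref{THMidp} by a now-standard reduction: pass from ``fixed size and indegrees'' to ``free size and indegrees'' by adding dummy arcs, and then invoke the integer decomposition property of the (ordinary) $b$-branching polytope together with the equitable-partition theorem to control those added coordinates. Let $P$ denote the convex hull of the incidence vectors of the $b$-branchings satisfying Conditions~\ref{ENUsize} and \ref{ENUindeg}, and fix $\kappa \in \ZZ_{++}$ and an integer vector $x \in \kappa P$. Since $x/\kappa \in P$ is a convex combination of incidence vectors of $b$-branchings all of which have size exactly $\ell$ and indegree exactly $b'(v)$ at each $v \in V'$, the vector $x$ itself is a nonnegative integer combination totalling $\kappa$, hence $x$ is the incidence vector of a multiset of arcs $A^{*}$ with $|A^{*}| = \kappa\ell$ and $d^{-}_{A^{*}}(v) = \kappa b'(v)$ for each $v \in V'$; equivalently, after taking $\kappa$ parallel copies of $D$ suitably, $A^{*}$ decomposes into $\kappa$ $b$-branchings of the prescribed type. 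The goal is to produce a partition of $A^{*}$ into exactly $\kappa$ $b$-branchings, each of size $\ell$ and each with indegree $b'(v)$ at every $v \in V'$.

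First I would set up the right ambient digraph. Work in the multidigraph $D^{*}$ on vertex set $V$ whose arc multiset is $A^{*}$ (an integer point of $\kappa P$ is exactly such a multiset, by the previous paragraph, since $P$ has vertices that are $0/1$ and $x$ is a sum of $\kappa$ of them by assumption that $x \in \kappa P$ is integral — this is where Theorem~\ref{THMidp} enters to guarantee $A^{*}$ is in fact partitionable into $\kappa$ $b$-branchings in $D^{*}$, because $x/\kappa$ lies in the $b$-branching polytope of $D^{*}$ scaled appropriately). Then apply Theorem~\ref{THMb} to $D^{*}$ with this value of $k = \kappa$: since $A^{*}$ can be partitioned into $\kappa$ $b$-branchings, it can be partitioned into $\kappa$ $b$-branchings $B_{1},\dots,B_{\kappa}$ with $|B_{i}| \in \{\lfloor |A^{*}|/\kappa\rfloor,\lceil |A^{*}|/\kappa\rceil\}$ and $d^{-}_{B_{i}}(v) \in \{\lfloor d^{-}_{A^{*}}(v)/\kappa\rfloor,\lceil d^{-}_{A^{*}}(v)/\kappa\rceil\}$ for all $i$ and all $v \in V$.

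Now the key observation closes the argument: $|A^{*}| = \kappa\ell$ is divisible by $\kappa$, so $\lfloor |A^{*}|/\kappa\rfloor = \lceil |A^{*}|/\kappa\rceil = \ell$, forcing $|B_{i}| = \ell$ for every $i$; and for each $v \in V'$ we have $d^{-}_{A^{*}}(v) = \kappa b'(v)$, again divisible by $\kappa$, so $d^{-}_{B_{i}}(v) = b'(v)$ for every $i$. Hence each $B_{i}$ is a $b$-branching in $D$ satisfying Conditions~\ref{ENUsize} and \ref{ENUindeg}, its incidence vector $x_{i}$ lies in $P$, and $x = x_{1}+\dots+x_{\kappa}$, which is precisely the integer decomposition property of $P$. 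The only delicate point is the reduction in the first paragraph: one must argue cleanly that an integer vector $x \in \kappa P$ is genuinely realized by an arc multiset $A^{*}$ in $\kappa$ copies of $D$ that is partitionable into $\kappa$ $b$-branchings of $D$ — this is exactly the content of combining the vertex description of $P$ with Theorem~\ref{THMidp}, and is the step I expect to require the most care to phrase rigorously; once it is in place, the divisibility bookkeeping above is routine.
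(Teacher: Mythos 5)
Your proposal is correct and follows essentially the same route as the paper's proof: form the multidigraph whose arc multiset is given by $x$, invoke Theorem~\ref{THMidp} to partition it into $\kappa$ $b$-branchings, apply Theorem~\ref{THMb} to equitize, and conclude via the divisibility of $|A^*|=\kappa\ell$ and $d^-_{A^*}(v)=\kappa b'(v)$. The step you flag as delicate is handled in the paper exactly as you anticipate, by combining the validity of the equalities $x(A)=\kappa\ell$ and $x(\delta^-v)=\kappa b'(v)$ on $\kappa Q$ with Theorem~\ref{THMidp} applied to the parallel-arc digraph.
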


By taking $V'= \emptyset$ in Theorem \ref{THMidpb2}, 
we obtain the following, 
which is an extension of the result of McDiarmid \cite{McD83}.

\begin{theorem}
\label{THMidpbsize}
Let $D=(V,A)$ be a digraph, 
$b \in \ZZ_{++}^V$, 
and 
$\ell\in \ZZ_+$. 
Then, 
the convex hull of the incidence vectors of the $b$-branchings of size $\ell$ has the integer decomposition property. 
\end{theorem}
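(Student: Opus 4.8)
The plan is to obtain Theorem~\ref{THMidpbsize} as the special case $V' = \emptyset$ of Theorem~\ref{THMidpb2}, so the real work is to prove Theorem~\ref{THMidpb2}. Fix a digraph $D=(V,A)$, a bound $b \in \ZZ_{++}^V$, a target size $\ell \in \ZZ_+$, a subset $V' \subseteq V$, and a target indegree vector $b' \in \ZZ_+^{V'}$ with $b'(v) \le b(v)$. Let $P$ denote the convex hull of the incidence vectors of the $b$-branchings satisfying conditions~\ref{ENUsize} and~\ref{ENUindeg}. Given $\kappa \in \ZZ_{++}$ and an integer vector $y \in \kappa P$, we must write $y = y_1 + \dots + y_\kappa$ with each $y_i$ the incidence vector of such a $b$-branching. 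First I would observe that $\kappa P$ sits inside $\kappa Q$, where $Q$ is the $b$-branching polytope; since $y$ is an integer point of $\kappa Q$, Theorem~\ref{THMidp} yields a decomposition of $y$ into $\kappa$ incidence vectors of $b$-branchings $B_1, \dots, B_\kappa$. These $B_i$ need not have size $\ell$ or the prescribed indegrees at vertices of $V'$, but because $y \in \kappa P$ we know $\sum_i |B_i| = \kappa\ell$ and, for each $v \in V'$, $\sum_i d^-_{B_i}(v) = \kappa\, b'(v)$.

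The key step is then to apply Theorem~\ref{THMb} to the arc set $\bigcup_i B_i$, which by construction is partitioned into the $\kappa$ $b$-branchings $B_1, \dots, B_\kappa$. Theorem~\ref{THMb} gives a \emph{new} partition of this same arc set into $\kappa$ $b$-branchings $B_1', \dots, B_\kappa'$ in which every size $|B_i'|$ equals $\lfloor \kappa\ell/\kappa \rfloor = \lceil \kappa\ell/\kappa \rceil = \ell$ (using that the total is $\kappa\ell$, exactly divisible), and in which, for each $v \in V$, every indegree $d^-_{B_i'}(v)$ equals $\lfloor d^-(v)/\kappa \rfloor$ or $\lceil d^-(v)/\kappa \rceil$ where $d^-(v)$ is the total indegree in $\bigcup_i B_i$. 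For $v \in V'$ the total indegree is $\kappa\, b'(v)$, which is divisible by $\kappa$, so $d^-_{B_i'}(v) = b'(v)$ for every $i$. Hence each $B_i'$ is a $b$-branching of size exactly $\ell$ whose indegree at every $v \in V'$ is exactly $b'(v)$; that is, each incidence vector $\chi^{B_i'}$ lies in $P$, and $\sum_i \chi^{B_i'} = \chi^{\bigcup_i B_i} = \sum_i \chi^{B_i} = y$. This is the required decomposition, so $P$ has the integer decomposition property, proving Theorem~\ref{THMidpb2}; taking $V' = \emptyset$ (so condition~\ref{ENUindeg} is vacuous) gives Theorem~\ref{THMidpbsize}.

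The one point that needs care — and the likeliest obstacle — is the bookkeeping around divisibility: Theorem~\ref{THMb} only guarantees sizes in $\{\lfloor N/\kappa\rfloor, \lceil N/\kappa\rceil\}$ and indegrees in the analogous rounded range, so the argument hinges on the totals $N = \kappa\ell$ and $d^-(v) = \kappa\,b'(v)$ being exact multiples of $\kappa$, which forces both floor and ceiling to collapse to the single value $\ell$ (resp.\ $b'(v)$). This in turn relies on correctly extracting, from the hypothesis $y \in \kappa P$, that the coordinate sums of $y$ over arcs and over arcs into a fixed $v \in V'$ are $\kappa\ell$ and $\kappa\,b'(v)$ respectively — an immediate consequence of writing $y = \kappa x'$ with $x' \in P$ a convex combination of incidence vectors all having size $\ell$ and indegree $b'(v)$ at $v$. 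A secondary subtlety is simply to confirm that Theorem~\ref{THMidp} may be applied: $\bigcup_i B_i$ equipped with the induced arcs has $b$-branching polytope containing $x'$ scaled appropriately, but in fact it is cleaner to apply Theorem~\ref{THMidp} to the original $D$ with the integer point $y \in \kappa Q$ directly, since $\kappa P \subseteq \kappa Q$, and only afterwards pass to the sub-arc-set $\bigcup_i B_i$ for the application of Theorem~\ref{THMb}. No new alternating-path or matroid machinery is needed; the proof is a short reduction combining Theorems~\ref{THMb} and~\ref{THMidp}.
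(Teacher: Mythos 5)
Your proof is correct and follows essentially the same route as the paper: first decompose the integer point via Theorem~\ref{THMidp}, then re-partition equitably via Theorem~\ref{THMb}, using the divisibility of the totals $\kappa\ell$ and $\kappa\,b'(v)$ to collapse the floor and ceiling to $\ell$ and $b'(v)$, and finally take $V'=\emptyset$ for Theorem~\ref{THMidpbsize}. The one detail to make explicit is that, since $y$ may have coordinates larger than one, the ``arc set $\bigcup_i B_i$'' must be understood as the multigraph with $y_a$ parallel copies of each arc $a$ (the paper's $D'$), so that $\sum_i \chi^{B_i'} = y$ holds with the correct multiplicities.
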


%%It is also possible to 
We can also 
remove Condition \ref{ENUsize} in Theorem \ref{THMidpb2}. 
\begin{theorem}
\label{THMidpindeg}
Let $D=(V,A)$ be a digraph and 
$b \in \ZZ_{++}^V$. 
For $V' \subseteq V$, 
let 
$b' \in \ZZ_{+}^{V'}$ satisfy $b'(v) \le b(v)$ for every $v \in V'$. 
Then, 
the convex hull of the incidence vectors of the $b$-branchings 
for which the indegree of each vertex $v \in V'$ is $b'(v)$ 
has the integer decomposition property. 
\end{theorem}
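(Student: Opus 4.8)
\emph{The plan} is to deduce Theorem~\ref{THMidpindeg} from Theorem~\ref{THMidpb2} by padding $b$-branchings of arbitrary size into $b$-branchings of one fixed size, using a gadget that leaves the indegree constraints untouched.

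First I would enlarge $D$ to $D^{+}=(V^{+},A^{+})$ by adjoining a directed path $p_{0}\to p_{1}\to\cdots\to p_{N}$ on $N+1=|A|+1$ new vertices, and put $b^{+}(v)=b(v)$ for $v\in V$ and $b^{+}(p_{j})=1$, keeping $V'$ and $b'$ as they are. The key property of the path is that \emph{every} subset of its $N$ arcs is a $b^{+}$-branching of $D^{+}$: it is a subforest of a path, and the sparsity condition \eqref{EQsparse} holds because a set of $q$ path-vertices spans at most $q-1$ path arcs while $b^{+}$ gives it weight at least $q$. Hence the path carries $b^{+}$-branchings of every size $s\in\{0,1,\dots,N\}$, namely its $s$-element arc sets. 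Since $D^{+}$ has no arc between $V$ and the path, a set $B^{+}\subseteq A^{+}$ is a $b^{+}$-branching with indegree $b'(v)$ at every $v\in V'$ if and only if $B^{+}\cap A$ is a $b$-branching of $D$ with indegree $b'(v)$ at every $v\in V'$ and $B^{+}\setminus A$ is an arbitrary subset of the path arcs. Let $P$ be the polytope of Theorem~\ref{THMidpindeg}, let $P^{+}$ be the convex hull of the incidence vectors of the $b^{+}$-branchings of $D^{+}$ of size exactly $N$ with indegree $b'(v)$ at every $v\in V'$, and let $\pi\colon\RR^{A^{+}}\to\RR^{A}$ be the deletion of the path coordinates. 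Because every $(b,b')$-branching of $D$ has at most $|A|=N$ arcs and can therefore be padded with path arcs up to size exactly $N$, one gets $\pi(P^{+})=P$, and in fact $P^{+}=\{(w,z): w\in P,\ z\in[0,1]^{N},\ |w|+|z|=N\}$, where $|\cdot|$ is the sum of coordinates. Applying Theorem~\ref{THMidpb2} to $D^{+},b^{+},\ell=N,V',b'$, the polytope $P^{+}$ has the integer decomposition property.

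Next I would transport this property across $\pi$. Take $\kappa\in\ZZ_{++}$ and an integral $x\in\kappa P$. Then $x/\kappa\in P$ and $0\le\sum_{a}x(a)\le\kappa N$, so from the description of $P^{+}$ the fibre $\{z:(x,z)\in\kappa P^{+}\}$ is exactly $\{z\in[0,\kappa]^{N}:\sum_{j}z_{j}=\kappa N-\sum_{a}x(a)\}$; this set is nonempty and contains an integral vector $z^{*}$ (fill the coordinates greedily, $\kappa$ at a time). Now $(x,z^{*})$ is an integral vector of $\kappa P^{+}$, so the integer decomposition property of $P^{+}$ gives $(x,z^{*})=\sum_{i=1}^{\kappa}\chi_{B_{i}^{*}}$ with each $B_{i}^{*}$ a $b^{+}$-branching of $D^{+}$ of size $N$ having indegree $b'(v)$ at every $v\in V'$. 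Restricting, $B_{i}:=B_{i}^{*}\cap A$ is a $b$-branching of $D$ with indegree $b'(v)$ at every $v\in V'$, and $\sum_{i=1}^{\kappa}\chi_{B_{i}}=x$, which is the conclusion of Theorem~\ref{THMidpindeg}.

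\emph{The main obstacle} I expect is the identity $P^{+}=\{(w,z): w\in P,\ z\in[0,1]^{N},\ |w|+|z|=N\}$, equivalently the statement that the right-hand polytope has only integral vertices, each the incidence vector of a $(b,b')$-branching of $D$ concatenated with the incidence vector of a path-arc subset of complementary size. This is precisely what makes the projection step sound --- integer decomposition is not inherited by projections in general --- and it depends on the gadget having been chosen so that its size-$s$ branchings form a full hypersimplex. Granting this, the remaining points (that path-arc subsets are $b^{+}$-branchings, that $\pi(P^{+})=P$, and that the branching and indegree conditions are preserved under restriction and padding) are routine. If this reduction turns out to be cumbersome, an alternative is to repeat the proof of Theorem~\ref{THMidpb2} with the size bookkeeping deleted, invoking only the indegree part of Theorem~\ref{THMb}.
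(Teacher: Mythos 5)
Your main argument has a genuine gap at exactly the point you flag as ``the main obstacle,'' namely the identity $P^{+}=\{(w,z): w\in P,\ z\in[0,1]^{N},\ |w|+|z|=N\}$. Call the right-hand side $R$. The inclusion $P^{+}\subseteq R$ is clear, but the reverse inclusion is the assertion that $R$ is an integral polytope, and $R=(P\times[0,1]^{N})\cap\{|w|+|z|=N\}$ is a hyperplane slice of an integral polytope --- an operation that in general creates fractional vertices. Every vertex of $R$ is either a vertex of $P\times[0,1]^{N}$ or the intersection of the hyperplane with an edge of $P\times[0,1]^{N}$; edges of the product are (edge of $P$)$\times$(vertex of the cube) or (vertex of $P$)$\times$(edge of the cube). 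The cube edges are harmless (the coordinate sum changes by exactly one along them), but an edge of $P$ joining $\chi_{B}$ and $\chi_{B'}$ with $\bigl||B|-|B'|\bigr|\ge 2$ would be cut by the hyperplane at a fractional point, producing a fractional vertex of $R$. So your identity is equivalent to a structural claim about the polytope $P$ itself --- that all of its edges join branchings whose sizes differ by at most one --- which you neither state nor prove, and which does not follow from the gadget: the ``full hypersimplex'' property of the path only controls the $z$-part of the product, not the edges of $P$. (One can rule out the nested case $B\subset B'$, $|B'\setminus B|=2$ because subsets of $b$-branchings are $b$-branchings, but non-nested edges with size gap $\ge 2$ are not excluded by any argument you give, especially in the presence of the exact-indegree constraints defining $P$.) Without this, the lifting step ``$(x,z^{*})\in\kappa P^{+}$'' is unjustified, and the whole reduction collapses; note also that the decomposition of $x/\kappa$ into vertices of $P$ is precisely the kind of information you are trying to establish, so it cannot be assumed when verifying membership in $\kappa P^{+}$.

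The one-sentence fallback you mention at the end is in fact the paper's proof: Theorem~\ref{THMidpindeg} is obtained by running the proof of Theorem~\ref{THMidpb2} verbatim with the size bookkeeping deleted. Concretely, given an integral $x\in\kappa Q$, one has $x(\delta^{-}v)=\kappa\cdot b'(v)$ for $v\in V'$; replacing each arc $a$ by $x_{a}$ parallel copies and applying Theorem~\ref{THMidp} partitions the new arc set into $\kappa$ $b$-branchings, and Theorem~\ref{THMb}\ref{CONdegree} then makes the partition equitable, forcing $d^{-}_{B_{i}}(v)=b'(v)$ for all $i$ and all $v\in V'$ since $d^{-}_{A'}(v)$ is divisible by $\kappa$. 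This route needs no projection argument and no claim about edges of $P$. I would recommend abandoning the padding reduction (or, if you want to keep it, supplying a proof of the integrality of $R$) and writing out the direct argument instead.
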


\subsection{Organization of the Paper}

The remainder of the paper is organized as follows. 
Section \ref{SECmf} is devoted to a proof for Theorem \ref{THMmf} on equitable partition into matching forests. 
In Section \ref{SECb}, 
we prove Theorem \ref{THMb} on equitable partition into $b$-branchings, 
and then derive Theorems \ref{THMidpb2}--\ref{THMidpindeg} on the integer decomposition property of the related polytopes.

\section{Equitable Partition into Matching Forests}
\label{SECmf}

The aim of this section is to prove Theorem \ref{THMmf}. 
%%As mentioned in Section \ref{SECintromf}, 
%%Theorem \ref{THMmf} is derived by extending the argument in the proof for 
%%the exchangeability of matching forests by Schrijver \cite[Theorem 2]{Sch00MF}. 
%%For the sake of completeness, 
%%however, 
%%below we describe a full proof. 
Let $G=(V,E,A)$ be a mixed graph. 
For a branching $B \subseteq A$, 
let $R(B) = V \setminus \partial B$, 
which represents the set of root vertices of $B$. 
Similarly, 
for a matching $M \subseteq E$, 
define $R(M)= V \setminus \partial M$. 
Note that, 
for a matching $M$ and a branching $B$, 
their union $M \cup B$ is a matching forest if and only if $R(M) \cup R(B) = V$. 

A \emph{source component} $X$ in a digraph $D=(V,A)$
is a strong component in $D$ such that no arc in $A$ enters $X$. 
In what follows, 
a source component is often denoted by its vertex set. 
Observe that, 
for a vertex subset $V' \subseteq V$, 
there exists a branching $B$ satisfying $R(B)= V'$ 
if and only if $|V' \cap X| \ge 1$ for every source component $X$ in $D$. 
This fact is extended to the following lemma on the partition of the arc set into two branchings, 
which can be derived from Edmonds' disjoint branchings theorem \cite{Edm73}. 

\begin{lemma}[Schrijver \cite{Sch00MF}]
\label{LEMsch}
Let $D=(V,A)$ be a digraph, 
and 
$B_1$ and $B_2$ are branchings in $D$ partitioning $A$. 
Then, 
for two vertex sets 
$R_1',R_2' \subseteq V$ such that 
$R_1' \cup R_2' = R(B_1) \cup R(B_2)$ 
and 
$R_1' \cap R_2' = R(B_1) \cap R(B_2)$, 
the arc set $A$ 
can be partitioned into two branchings $B_1'$ and $B_2'$ 
such that 
$R(B_1')=R_1'$ and $R(B_2')=R_2'$ 
if and only if 
$$
|R_1' \cap X| \ge 1 \quad \mbox{and} \quad |R_2' \cap X| \ge 1 \quad \mbox{for each source component $X$ in $D$}. 
$$
\end{lemma}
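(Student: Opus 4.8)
The plan is to reduce Lemma~\ref{LEMsch} to Edmonds' disjoint branchings theorem by a short counting argument, and then to verify the resulting cut condition with two elementary facts about branchings. The necessity of the condition is immediate from the remark preceding the lemma: if $A$ is partitioned into branchings $B_1',B_2'$ with $R(B_i')=R_i'$, then, since a branching always has a root in every source component, $R_1'$ and $R_2'$ each meet every source component. (That fact is the forest bound: a source component $X$ has no entering arc, so the arcs of $B_1'$ with head in $X$ all lie inside $X$ and form a forest there; their number is therefore at most $|X|-1$, so the $B_1'$-indegrees over $X$ sum to less than $|X|$ and some vertex of $X$ has indegree $0$.)

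For the sufficiency, I would first note that the two hypotheses $R_1'\cup R_2'=R(B_1)\cup R(B_2)$ and $R_1'\cap R_2'=R(B_1)\cap R(B_2)$ give, by inclusion--exclusion, $|R_1'|+|R_2'|=|R(B_1)|+|R(B_2)|=2|V|-|A|$. Hence it is enough to produce arc-disjoint branchings $B_1',B_2'\subseteq A$ with $R(B_i')\subseteq R_i'$, i.e.\ such that $B_i'$ covers every vertex outside $R_i'$: then $|B_1'|+|B_2'|=(|V|-|R(B_1')|)+(|V|-|R(B_2')|)\ge 2|V|-|R_1'|-|R_2'|=|A|$, and as $B_1',B_2'$ are disjoint subsets of $A$ this forces equality throughout, so $R(B_i')=R_i'$ exactly and $B_1'\cup B_2'=A$. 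By Edmonds' disjoint branchings theorem, such arc-disjoint branchings (each covering the complement of its prescribed root set) exist if and only if
\[
d^-_A(U)\ \ge\ |\{\, i\in\{1,2\}: R_i'\cap U=\emptyset\,\}| \qquad\text{for every nonempty } U\subseteq V,
\]
where $d^-_A(U)$ denotes the number of arcs of $A$ entering $U$. So the proof reduces to checking this inequality.

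To check it, fix a nonempty $U\subseteq V$ and argue by cases on the right-hand side. If it is $0$ there is nothing to prove. If it is $1$, say $R_1'\cap U=\emptyset$, then $d^-_A(U)\ge 1$: were $d^-_A(U)=0$, then $U$ would be a union of strong components of $D$ closed under taking predecessors in the condensation, hence would contain a source component $X$, and then $R_1'\cap U\supseteq R_1'\cap X\neq\emptyset$ by hypothesis, a contradiction. If it is $2$, i.e.\ $R_1'\cap U=R_2'\cap U=\emptyset$, then by the union hypothesis $R(B_1)\cap U=R(B_2)\cap U=\emptyset$ as well, so both $B_1$ and $B_2$ cover all of $U$; thus for each $\ell\in\{1,2\}$ exactly $|U|$ arcs of $B_\ell$ have head in $U$, of which at most $|U|-1$ are induced by $U$ (they form a forest), so at least one arc of $B_\ell$ enters $U$; since $B_1$ and $B_2$ partition $A$, $d^-_A(U)=d^-_{B_1}(U)+d^-_{B_2}(U)\ge 2$. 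This establishes the cut condition and hence the lemma.

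I do not expect a genuine obstacle here, since the argument is a clean reduction to Edmonds' theorem; the points that need care are quoting Edmonds' disjoint branchings theorem in exactly the root-set/cut-condition form used above, the size count that upgrades ``arc-disjoint branchings covering the $R_i'$-complements'' to ``a partition of $A$ with root sets precisely $R_1'$ and $R_2'$,'' and the standard observation that a nonempty vertex set with no entering arc contains a source component.
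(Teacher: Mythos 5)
The paper states this lemma without proof, citing Schrijver and remarking only that it can be derived from Edmonds' disjoint branchings theorem; your proposal carries out exactly that derivation and is correct. The counting step (using $|R_1'|+|R_2'|=|R(B_1)|+|R(B_2)|=2|V|-|A|$ to upgrade arc-disjoint branchings with $R(B_i')\subseteq R_i'$ to a partition of $A$ with root sets exactly $R_i'$) and the three-case verification of the cut condition are both sound.
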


We remark that Schrijver \cite{Sch03} derived Theorem \ref{THMbra} from Lemma \ref{LEMsch}. 
Here we prove that Lemma \ref{LEMsch} further leads to Theorem \ref{THMmf}. 

\begin{proof}[of Theorem \ref{THMmf}]
The case $k=1$ is trivial, 
and thus let $k \ge 2$. 
Let $F_1,\ldots ,F_k$ be matching forests minimizing
\begin{align}
\label{EQdiffsum}
\sum_{1 \le i < j \le k}
||\partial F_i| - |\partial F_j||
\end{align}
among those partitioning $E \cup A$. 
We prove that 
every pair of 
$F_i$ and $F_j$ ($i,j \in [k]$) attains \eqref{EQmf}. 
%%For each $i \in [k]$, 
%%let  $M_{i}$ and $B_{i}$ denote  $F_{i} \cap E$ and $F_{i} \cap A$, 
%%respectively. 

Suppose to the contrary that 
\eqref{EQmf} does not hold for some $i,j \in [k]$. 
Without loss of generality, 
assume 
\begin{align}
\label{EQdiff3}
 |\partial F_1| - |\partial F_2|  \ge 3. 
\end{align}
Let $A' = B_1 \cup B_2$. 
Denote the family of source components in $(V,A')$ by $\X'$. 
If a vertex $v \in V$ belongs to $R(B_1) \cap R(B_2)$, 
then $v$ has no incoming arc in $A'$, 
and hence 
$v$ itself forms a source component in $(V,A')$. 
Thus, 
for $X \in \X'$ with $|X| \ge 2$, 
it follows that 
$X \cap R(B_1)$ and $X \cap R(B_2)$ are not empty and disjoint with each other. 
Denote the family of such $X \in \X'$ by $\X''$, 
i.e.,\ 
$\X'' = \{ X \in \X' \mid |X|\ge 2 \}$. 
For each $X\in \X''$, 
take 
a pair $e_X$ of vertices of which one vertex is in $R(B_1)$ 
and the other in $R(B_2)$.   
Denote $N = \{ e_X \mid X \in \X'' \}$. 
Note that $N$ is a matching. 

Consider an undirected graph $H=(V, M_1 \cup M_2 \cup N)$. 
Observe that each vertex $v \in V$ has degree at most two: 
if a vertex $v \in V$ is covered by both $M_1$ and $M_2$, 
then 
it follows that 
$v \in R(B_1) \cap R(B_2)$, 
implying that $v$ is not covered by $N$. 
Thus, 
$H$ consists of a disjoint collection of paths, 
some of which are 
possibly isolated vertices, and cycles. 

\begin{proposition}
\label{PROPinternal}
For a vertex $v$ in $H$ with degree exactly two, 
it holds that $v \in \partial F_1 \cap \partial F_2$. 
\end{proposition}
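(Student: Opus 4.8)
The plan is to analyze the three edge sets $M_1$, $M_2$, $N$ incident to $v$ and show that in every case $v$ is covered by both $F_1$ and $F_2$. Since $v$ has degree exactly two in $H=(V,M_1\cup M_2\cup N)$, the two edges incident to $v$ come from (at least) two of the three sets $M_1$, $M_2$, $N$; note that $v$ cannot be covered twice within a single matching, and the edges of $N$ form a matching, so the two edges at $v$ lie in two distinct sets among $M_1,M_2,N$. This yields three cases: $\{M_1,M_2\}$, $\{M_1,N\}$, and $\{M_2,N\}$.

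In the case where $v$ is covered by both $M_1$ and $M_2$, the already-established observation in the paragraph preceding the proposition shows $v\in R(B_1)\cap R(B_2)$; but then $v$ forms its own source component in $(V,A')$, which has size one, so $v$ is not covered by $N$ — fine, this case is internally consistent — and moreover $v\in\partial M_1\subseteq\partial F_1$ and $v\in\partial M_2\subseteq\partial F_2$, giving $v\in\partial F_1\cap\partial F_2$. In the case where $v$ is covered by $M_1$ and by $N$, we get $v\in\partial M_1\subseteq\partial F_1$ immediately; for the other containment, being covered by some $e_X\in N$ with $X\in\X''$ means $v\in X$ and $v$ is the endpoint of $e_X$ lying in either $R(B_1)$ or $R(B_2)$. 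If $v\in R(B_2)$ then since $F_2=M_2\cup B_2$ is a matching forest and $R(M_2)\cup R(B_2)=V$ forces nothing here, we instead argue directly: $v\in R(B_2)=V\setminus\partial B_2$ means $v$ is not covered by $B_2$; but $v$ is covered by $M_1$, and since $R(M_1)\cup R(B_1)=V$... — this needs the correct reasoning, which I expand below.

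Here is the cleaner route for the mixed cases. Suppose the two edges at $v$ are one from $M_1$ and one from $N=\{e_X\}$. Then $v\in\partial M_1\subseteq\partial F_1$. Since $v$ is an endpoint of some $e_X$, by construction $v\in R(B_1)$ or $v\in R(B_2)$. It cannot be that $v\in R(B_1)$: $v\in\partial M_1$ would then give $v\in R(B_1)\cap\partial M_1$, but wait — that is allowed since $F_1=M_1\cup B_1$ only requires $\partial M_1\cap\partial B_1=\emptyset$, and $R(B_1)=V\setminus\partial B_1\supseteq\partial M_1$. So $v\in R(B_1)$ is consistent with $v\in\partial M_1$; thus I must use a different distinguishing property. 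The correct observation: if $v$ is an endpoint of $e_X$ with $v\in R(B_1)$, then the \emph{other} endpoint of $e_X$ is in $R(B_2)$, but this says nothing about $v$ and $F_2$ directly. The resolution is to recall how $e_X$ interacts with $M_2$: actually the key fact I will use is that $v$ is covered by the edge of $N$ at $v$ \emph{and} by $M_1$, so $v\notin R(B_1)\cap R(B_2)$ (as shown, vertices in $R(B_1)\cap R(B_2)$ are singleton source components, hence uncovered by $N$); combined with $v\in R(B_1)$ or $v\in R(B_2)$, exactly one of these holds. If $v\in R(B_1)$, then since $F_1=M_1\cup B_1$ is a matching forest and $v$ is covered by $M_1$, fine; but for $F_2$ I claim $v\in\partial F_2$: indeed $v\notin R(B_2)$ here (as $v\in R(B_1)$ and not in both), so $v\in\partial B_2\subseteq\partial F_2$. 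Symmetrically if $v\in R(B_2)$ then $v\in\partial B_1\subseteq\partial F_1$ (already have) and $v\in R(B_2)$... then for $F_2$: $v\in R(B_2)$ and $v$ covered by $e_X\in N$ means $v\in R(B_2)$, but then is $v\in\partial M_2$? Not necessarily. So in the subcase the $N$-edge and $M_1$-edge meet at $v$ with $v\in R(B_2)$, I would instead observe $v\in\partial M_1\subseteq\partial F_1$ and need $v\in\partial F_2$; this is the subcase I expect to be the main obstacle, and the fix is that such a $v$ cannot actually be an endpoint of $e_X$ on the $R(B_2)$-side while also being covered by $M_1$ if we chose $e_X$ avoiding $M_1$-covered vertices when possible — but more robustly, since $v\in R(B_2)$ and $v\in\partial M_1$, and $M_2\cup B_2$ is a matching forest, there is no constraint forcing $v\in\partial M_2$, so the honest argument must be that the $N$-edge at $v$ together with an $M_2$-edge at $v$ is the relevant case and the $\{M_1,N\}$ subcase with $v\in R(B_2)$ is handled by symmetry after swapping indices. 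I will therefore organize the proof so that the three cases reduce, via the symmetry $1\leftrightarrow2$, to: (a) $v$ covered by $M_1$ and $M_2$ — done above; and (b) $v$ covered by $M_1$ and by $e_X\in N$ with $v\in R(B_2)$ — in which case $v\in\partial M_1\subseteq\partial F_1$, and since $v\in R(B_2)=V\setminus\partial B_2$ we use that $v$ being an interior vertex of the path also forces, by the endpoint structure of $N$, the companion endpoint of $e_X$ to lie in $R(B_1)$, leaving me to extract $v\in\partial F_2$ from $v\in\partial M_2\cup\partial B_2$; concretely $v\notin\partial B_2$ forces, were $v\notin\partial M_2$ as well, that $v\in R(M_2)\cap R(B_2)$, but then $F_2$ could be enlarged toward covering $v$, contradicting the need for a contradiction later — so I will instead close case (b) by noting $v\in R(B_2)$ and $v\in\partial M_1$ with $v$ an $N$-endpoint is impossible under the choice of $e_X$, since we may always pick the $R(B_2)$-endpoint of $e_X$ outside $\partial M_1$ when $|X\cap R(B_2)|$... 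This last point is the crux and I will handle it by choosing the representatives $e_X$ more carefully, or by a short parity/counting argument; that choice of representatives is the step I expect to require the most care.
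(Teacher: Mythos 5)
There is a genuine gap, and it sits exactly where you say the ``crux'' is. You actually write down the key fact --- $R(B_1)=V\setminus\partial B_1\supseteq\partial M_1$, which holds because $F_1=M_1\cup B_1$ is a matching forest and hence $\partial M_1\cap\partial B_1=\emptyset$ --- but you then dismiss it as merely showing that $v\in R(B_1)$ is \emph{consistent} with $v\in\partial M_1$. It shows more: $v\in\partial M_1$ \emph{forces} $v\in R(B_1)$. With that implication in hand, the subcase that blocks you (namely, $v$ is the $R(B_2)$-endpoint of its $N$-edge while $v\in\partial M_1$) is vacuous: $v\in\partial M_1$ gives $v\in R(B_1)$; since $v$ is covered by some $e_X$ with $X\in\X''$, $v$ lies in a source component of size at least two and therefore $v\notin R(B_1)\cap R(B_2)$; hence $v\notin R(B_2)$, so $v\in\partial B_2\subseteq\partial F_2$. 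That is the paper's entire argument for the mixed case, three lines long. No more careful choice of the representatives $e_X$, no parity or counting argument, and no extra symmetry reduction is needed. Your handling of the $\partial M_1\cap\partial M_2$ case is correct, but as written the proof ends with an unfulfilled plan to repair a subcase that cannot occur, so it is incomplete.
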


\begin{proof}[of Proposition \ref{PROPinternal}]
It is clear if $v \in \partial M_1 \cap \partial M_2$, 
and suppose not. 
Without loss of generality, 
assume $v \in \partial M_1 \cap \partial N$. 
It then follows from $v \in \partial M_1$ that 
$v \in R(B_1)$. 
Since $v \in \partial N$, 
this implies that $v \not \in R(B_2)$. 
We thus conclude $v \in \partial B_2 \subseteq \partial F_2$.
\end{proof}

By Proposition \ref{PROPinternal}, 
each vertex $u \in \partial F_1 \sd \partial F_2$ is an endpoint of a path in $H$. 
It then follows from \eqref{EQdiff3} that 
there must exist a path $P$ such that 
\begin{itemize}
\item
one endpoint $u$ of $P$ belongs to $\partial F_1 \setminus \partial F_2$, and 
\item
the other endpoint $u'$ of $P$ belongs to 
%%$\partial F_1\setminus \partial F_2$ or $\partial F_1\cap \partial F_2$. 
$\partial F_1$.
\end{itemize}
We remark that 
$u'$ may or may not belong to $\partial F_2$. 
It may also be the case that $u'$ is null, i.e.,\ 
$u$ is an isolated vertex which by itself forms $P$. 

Denote the set of vertices in $P$ by $V(P)$, 
and 
the set of edges in $P$ belonging to $M_1 \cup M_2$ by $E(P)$. 
Define 
$M_1' = M_1 \sd E(P)$ 
and 
$M_2' = M_2 \sd E(P)$. 
%%It is not difficult to see that 
It follows that 
$M_1'$ and $M_2'$ are matchings satisfying 
\begin{align*}
%%{}&{}\partial M_1' = (\partial M_1 \setminus V(P)) \cup  (V(P) \cap \partial M_2), \\
%%{}&{}\partial M_2' = (\partial M_2 \setminus V(P)) \cup  (V(P) \cap \partial M_1).
{}&{}R( M_1') = (R(M_1) \setminus V(P)) \cup  (R(M_2) \cap V(P)), \\
{}&{}R(M_2') = (R(M_2) \setminus V(P)) \cup  (R(M_1) \cap V(P) ).
\end{align*}
Also define 
\begin{align*}
{}&{}R_1' = (R(B_1) \setminus V(P)) \cup (R(B_2) \cap V(P)), \\
{}&{}R_2' = (R(B_2) \setminus V(P)) \cup (R(B_1) \cap V(P)). 
\end{align*}
It then follows that 
$R_1' \cup R_2' = R(B_1) \cup R(B_2)$ 
and 
$R_1' \cap R_2' = R(B_1) \cap R(B_2)$. 
It also follows from the construction of $H$ that 
$R_i'  \cap X\neq \emptyset$ for every $X \in \X'$ and for $i=1,2$. 
Thus, 
by Lemma \ref{LEMsch}, 
the arc set $A'$ can be partitioned into two branchings $B_1'$ and $B_2'$ such that 
$R(B_1') = R_1'$ and $R(B_2') = R_2'$. 
Then it holds that 
\begin{align*}
R(M_1') \cup R(B_1')
{}&{}= R(M_1') \cup R_1' \\
{}&{}= ((R(M_1) \cup R(B_1)) \setminus V(P)) \cup ((R(M_2) \cup R(B_2)) \cap V(P)) \\
{}&{}= (V \setminus V(P)) \cup (V \cap V(P)) \\
{}&{}=V,
\end{align*}
and hence $F_1':=M_1' \cup B_1'$ is a matching forest in $G$. 
This is also the case with 
$F_2':=M_2' \cup B_2'$. 

Now we have two disjoint matching forests $F_1'$ and $F_2'$ 
such that 
$F_1' \cup F_2' = F_1 \cup F_2$. 
Moreover, 
by the definition of $P$, 
we have that 
$$||\partial F_1'| - |\partial F_2'|| = ||\partial F_1| - |\partial F_2|| - 2
\quad \mbox{or} \quad 
||\partial F_1'| - |\partial F_2'|| = ||\partial F_1| - |\partial F_2|| - 4,$$ 
and in particular, 
by \eqref{EQdiff3},  
$$||\partial F_1'| - |\partial F_2'|| < ||\partial F_1| - |\partial F_2||. $$ 
It also follows that 
\begin{align*}
{}&{}\sum_{i \in [k] \setminus \{1,2\}} \left( \left| \left|\partial F_1'\right| - \left|\partial F_i\right| \right| + \left|\left|\partial F_2'\right| - \left|\partial F_i\right|\right|\right) \\
\le 
{}&{}\sum_{i \in [k] \setminus \{1,2\}} \left( \left| \left|\partial F_1\right| - \left|\partial F_i\right| \right| + \left|\left|\partial F_2\right| - \left|\partial F_i\right|\right|\right) .
\end{align*}
This contradicts the fact that $F_1,\ldots, F_k$ minimize \eqref{EQdiffsum}, 
and thus completes the proof of the theorem. 
\end{proof}

We remark that, 
if an arbitrary partition of $E \cup A$ into $k$ matching forests is given, 
a partition of $E \cup A$ into $k$ matching forests satisfying \eqref{EQmf} can be found in polynomial time. 
This can be done 
by repeatedly applying the update of two matching forests described in the above proof. 
The time complexity follows from the fact that  
each update can be done in polynomial time and 
decreases the value \eqref{EQdiffsum} by at least two.

\section{Equitable Partition into $b$-branchings}
\label{SECb}

In this section 
we first prove Theorem \ref{THMb}, 
and then derive Theorems \ref{THMidpb2}--\ref{THMidpindeg}. 
In proving Theorem \ref{THMb}, 
we make use of the following lemma, 
which is an extension of Lemma \ref{LEMsch} to $b$-branchings. 

%%\begin{lemma}[\cite{Tak18a}]
%%Let $D=(V,A)$ be a digraph, 
%%$b \colon V \to \ZZ_{++}$, 
%%$B_1,B_2 \subseteq A$ be $b$-branchings, 
%%$s \in V$ satisfy $d^-_{B_1}(s) < d^-_{B_2}(s)$. 
%%Then, 
%%$D$ has $b$-branchings $B_1', B_2' \subseteq A$ satisfying the following two conditions: 
%%\begin{enumerate}
%%\item
%%	$B_1' \cup B_2' = B_1 \cup B_2$, $B_1' \cap B_2' = B_1 \cap B_2$; 
%%\item
%%	at least one of the following \ref{ENUa} and \ref{ENUb}: 
%%	\begin{enumerate}
%%	\item
%%	\label{ENUa}
%%		$d_{B_1'}^- = d_{B_1}^- + \chi_s$ and $d_{B_2'}^- = d_{B_2}^- - \chi_s$, 
%%	\item
%%	\label{ENUb}
%%		there exist $t \in V$ satisfying $d_{B_1}^-(t) > d_{B_2}^-(t)$, 
%%		$d_{B_1'}^- = d_{B_1}^- + \chi_s - \chi_t$ and $d_{B_2'}^- = d_{B_2}^- - \chi_s+\chi_t$. 	
%%	\end{enumerate}
%%\end{enumerate}
%%\end{lemma}

\begin{lemma}[\cite{Tak22}]
\label{LEMbpartition}
Let $D=(V,A)$ be a digraph and $b \in \ZZ_{++}^V$. 
Suppose that  
$A$ can be partitioned into two $b$-branchings $B_1, B_2 \subseteq A$. 
Then, 
for two vectors $b_1', b_2' \in \ZZ_{++}^V$ satisfying $b_1'\leq b$, $b_2'\leq b$, and $b_1' + b_2'=d_A^-$, 
the arc set $A$ can be partitioned into two $b$-branchings $B_1'$ and $B_2'$ such that 
$d^-_{B_1'}=b_1'$ and $d^-_{B_2'}=b_2'$ if and only if 
\begin{align*}
%%\label{EQsource}
b_1'(X) < b(X) \quad \mbox{and} \quad b_2'(X) < b(X) \quad \mbox{for each source component $X$ in $D$}. 
\end{align*}
\end{lemma}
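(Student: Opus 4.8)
The plan is to prove necessity and sufficiency separately, treating necessity as the easy half and obtaining sufficiency by extending to $b$-branchings the Edmonds-type reasoning behind Lemma~\ref{LEMsch}. Write $d^-_A(X)=\sum_{v\in X}d^-_A(v)$, let $\varrho(X)$ be the number of arcs of $A$ entering a nonempty set $X\subseteq V$, and recall $d^-_A(X)=|A[X]|+\varrho(X)$. For necessity, let $B_1',B_2'$ be $b$-branchings with $d^-_{B_i'}=b_i'$. If $X$ is a source component, then every arc with head in $X$ has its tail in $X$ as well (otherwise it would enter $X$), so $b_i'(X)=\sum_{v\in X}d^-_{B_i'}(v)=|B_i'[X]|\le b(X)-1<b(X)$ by the sparsity condition~\eqref{EQsparse}. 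This is exactly the claimed inequality.

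For sufficiency I would first reduce ``partition'' to ``arc-disjoint.'' Since $b_1'+b_2'=d^-_A$, any arc-disjoint $b$-branchings $B_1',B_2'$ with $d^-_{B_i'}=b_i'$ satisfy $|B_1'|+|B_2'|=b_1'(V)+b_2'(V)=d^-_A(V)=|A|$ and hence partition $A$; so it is enough to find arc-disjoint $b$-branchings with the prescribed indegrees. For the latter I would invoke the $b$-branching analogue of Edmonds' disjoint-branching theorem: for $b_1',b_2'\le b$ with $b_1'+b_2'=d^-_A$, such arc-disjoint $b$-branchings exist if and only if the following condition, henceforth $(\star)$, holds:
\[
\varrho(X)\ \ge\ \max\{0,\,b_1'(X)-b(X)+1\}+\max\{0,\,b_2'(X)-b(X)+1\}\qquad\text{for every nonempty }X\subseteq V.
\]
Necessity of $(\star)$ is the same computation as above applied to each colour class (an arc-disjoint pair forces $\varrho(X)\ge\sum_i(b_i'(X)-|B_i'[X]|)$ and each summand is at least $\max\{0,b_i'(X)-b(X)+1\}$); and $(\star)$ is the right generalisation because setting $b\equiv1$ collapses its right-hand side to $|\{i:R(B_i')\cap X=\emptyset\}|$, recovering Edmonds' condition.

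It then remains to deduce $(\star)$ for every $X$ from the source-component hypothesis. Put $\alpha=b_1'(X)-b(X)+1$ and $\beta=b_2'(X)-b(X)+1$; these are integers, and $b_i'\le b$ gives $\alpha,\beta\le1$. Using $A=B_1\cup B_2$ and the sparsity of the given $B_1,B_2$ we get $\varrho(X)=b_1'(X)+b_2'(X)-|A[X]|\ge b_1'(X)+b_2'(X)-2(b(X)-1)=\alpha+\beta$, together with $\varrho(X)\ge0$. When $\alpha,\beta\le0$ or $\alpha=\beta=1$ this already yields $(\star)$; the only remaining case is the mixed one, say $\alpha=1$ (so $b_1'(X)=b(X)$) and $\beta\le0$, where $(\star)$ requires $\varrho(X)\ge1$. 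If instead $\varrho(X)=0$, then $X$ has no entering arc and is therefore a union of strong components closed under taking predecessors, so it contains a source component $C$; the hypothesis gives $b_1'(C)<b(C)$, and since $b_1'\le b$ elsewhere this forces $b_1'(X)<b(X)$, contradicting $\alpha=1$. Hence $\varrho(X)\ge1$ and $(\star)$ holds in all cases.

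The main obstacle is establishing $(\star)$ itself, i.e.\ the disjoint-$b$-branching theorem with prescribed indegrees. The guiding intuition is a blow-up in which each $v$ is replaced by $b(v)$ copies, so that $b_i'(v)$ becomes the number of covered copies of $v$, the sparsity condition~\eqref{EQsparse} becomes the forest condition on copy-sets, and---because $b_1',b_2'\in\ZZ_{++}^V$ forces $d^-_A(v)\ge2$ and hence makes every source component strongly connected with at least two vertices---each source component of $D$ blows up to a single source component of the auxiliary digraph, so that ``$b_i'(X)<b(X)$'' matches ``the root set meets every source component.'' The difficulty is that this blow-up is not a black-box reduction to Lemma~\ref{LEMsch}: respecting the multiplicities of $A$ (each original arc realised exactly once) is a non-matroidal constraint, so $(\star)$ has to be proved by genuinely extending Edmonds' argument---for instance through the sparsity matroid and a matroid-union/augmenting-path analysis in the spirit of Kakimura, Kamiyama and Takazawa~\cite{KKT20}---rather than merely quoting the branching case. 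Verifying this extension, and checking that its feasibility condition is precisely $(\star)$, is where the real work lies.
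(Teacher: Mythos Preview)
The paper does not prove this lemma; it is quoted from \cite{Tak21+} and used as a black box in the proof of Theorem~\ref{THMb}. There is therefore no in-paper proof to compare against, and I can only assess your proposal on its own terms.

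Your necessity argument is correct, and the reduction of sufficiency to the condition $(\star)$ together with the case analysis showing that the source-component hypothesis (plus the existence of the partition $B_1\cup B_2=A$) implies $(\star)$ for every nonempty $X$ is clean and correct. The real issue is the one you flag yourself: you do not prove $(\star)$. Since $(\star)$ is the statement ``arc-disjoint $b$-branchings with prescribed indegree vectors $b_1',b_2'$ exist iff a certain cut condition holds,'' it is essentially a reformulation (indeed a mild strengthening, quantified over all $X$ rather than just source components) of the very lemma you are trying to prove. So the proposal has the right architecture but leaves the load-bearing step as an acknowledged gap. The blow-up heuristic you sketch is the right intuition, but as you say it is not a black-box reduction to Lemma~\ref{LEMsch}: the constraint that each original arc is used exactly once across its copies is not automatic. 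A complete proof would need either to adapt the supermodular/packing machinery of \cite{KKT20} to the prescribed-indegree setting, or to run an exchange argument directly on the given partition $B_1,B_2$ (moving one arc at a time between colour classes while maintaining both sparsity conditions). Until one of those is carried out, the sufficiency direction remains unproven.
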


We now prove Theorem \ref{THMb}.

\begin{proof}[of Theorem \ref{THMb}]
The case $k=1$ is trivial, 
and thus let $k \ge 2$. 
Let $B_1,\ldots, B_k$ be $k$ $b$-branchings 
minimizing 
\begin{multline}
\label{EQdiff}
\sum_{i\in [k]} 
\left(
	\min \left\{  \left| |B_i| - \left\lfloor\dfrac{|A|}{k} \right\rfloor  \right|, \left| |B_i| - \left \lceil \dfrac{|A|}{k} \right \rceil  \right|  \right\} 
	\right.\\ \left.
	+ \sum_{v \in V} \min \left\{  \left| |d_{B_i}^-(v)| - \left\lfloor\dfrac{d_A^-(v)}{k} \right\rfloor  \right|, \left| |d_{B_i}^-(v)| - \left\lceil\dfrac{d_A^-(v)}{k} \right\rceil  \right|  \right\} 
\right)
\end{multline}
among those partitioning $A$. 
%%If, 
%%for any two of those $k$ $b$-branchings $B_i$ and $B_j$, 
%%$||B_i| - |B_j|| \le 1$ and 
%%$|d^-_{B_i}(v) - d^-_{B_j}(v)| \le 1$ for each $v \in V$, 
%%then we are done. 

Suppose to the contrary that 
Condition \ref{CONsize} or \ref{CONdegree} does not hold for some $i\in [k]$. 
Then, 
there exists $j \in [k]$ such that 
%%\begin{align}
%%\label{EQij1}
%%{}&{}|B_i| < \left\lfloor  \frac{|A|}{k} \right\rfloor \quad \mbox{and} \quad |B_j| \ge \left\lceil  \frac{|A|}{k} \right\rceil; \\
%%{}&{}|B_i| > \left\lceil  \frac{|A|}{k} \right\rceil \quad \mbox{and} \quad |B_j| \le \left\lfloor  \frac{|A|}{k} \right\rfloor; \\
%%{}&{}d_{B_i}^- (v) < \left\lfloor  \frac{d_A^-(v)}{k} \right\rfloor \quad \mbox{and}
%%\quad d_{B_j}^-(v) \ge \left\lceil  \frac{d_A^-(v)}{k} \right\rceil \quad \mbox{for some $v \in V$; and} \\
%%\label{EQij4}
%%{}&{}d_{B_i}^- (v) > \left\lceil  \frac{d_A^-(v)}{k} \right\rceil \quad \mbox{and}
%%\quad d_{B_j}^-(v) \ge \left\lfloor  \frac{d_A^-(v)}{k} \right\rfloor \quad \mbox{for some $v \in V$}. 
%%\end{align}
\begin{align}
\label{EQij1}
\min\left\{|B_i|,|B_j|\right\} < \dfrac{|A|}{k} < \max\left\{|B_i|,|B_j|\right\}, \ 
\left| |B_i| - |B_j| \right| \ge 2, 
\end{align}
or 
there exist $j \in [k]$ and  $v \in V$ such that 
\begin{align}
\label{EQij2}
\min \left\{d_{B_i}^-(v) , d_{B_j}^-(v) \right\} < \frac{d_A^-(v)}{k} < \max \left \{d_{B_i}^-(v) , d_{B_j}^-(v) \right\}, \ 
\left| d_{B_i}^-(v) - d_{B_j}^-(v) \right| \ge 2. 
\end{align}
Without loss of generality, 
let $i=1$ and $j=2$, 
and 
denote $b_1=d_{B_1}^-$ and $b_2=d_{B_2}^-$. 
Let $D' = (V, B_1 \cup B_2)$. 
Since $B_1$ and $B_2$ are $b$-branchings, 
it directly follows the definition of $b$-branchings that 
\begin{alignat}{2}
\label{EQ1-4}
&{}b_1(v) \le b(v)& \quad {}&{}\mbox{for each $v\in V$}, \\
&{}b_2(v) \le b(v)& \quad {}&{}\mbox{for each $v\in V$}, \\
&{}b_1(X) \le b(X)-1&   \quad {}&{}\mbox{for each source component $X$ in $D'$},\\
\label{EQ4-4}
&{}b_2(X) \le b(X)-1&   \quad {}&{}\mbox{for each source component $X$ in $D'$}. 
%%\label{EQ1-4}
%%&{}d^-_{B_1}(v) \le b(v)& \quad {}&{}\mbox{for each $v\in V$}, \\
%%&{}d^-_{B_2}(v) \le b(v)& \quad {}&{}\mbox{for each $v\in V$}, \\
%%&{}d^-_{B_1}(X) \le b(X)-1&   \quad {}&{}\mbox{for each source component $X$ in $D'$},\\
%%\label{EQ4-4}
%%&{}d^-_{B_2}(X) \le b(X)-1&   \quad {}&{}\mbox{for each source component $X$ in $D'$}. 
\end{alignat}
%%Let $\mathcal{X} = \{X_1,\ldots, X_p\}$ be the set of source components in $D'$. 

Let $\mathcal{X}$ be the set of source components $X$ in $D'$ 
such that $b_1(X) + b_2(X)$ is even, 
and 
let $\mathcal{Y}$ be the set of source components $Y$ in $D'$ 
such that $b_1(Y) + b_2(Y)$ is odd. 
Then, 
define $b_1',b_2'\in \ZZ_+^V$ 
satisfying 
$b_1' + b_2'=  b_1+b_2$ in the following manner. 
\begin{itemize}
\item
For all $X \in \mathcal{X}$, 
take $b_1'(v),b_2'(v) \in \ZZ_+$ for all $v \in X$ so that 
	\begin{alignat*}{2}
	&{}b_1'(v) = b_2'(v) = \frac{b_1(v) + b_2(v)}{2}{}&{} \quad {}&{}\mbox{if $b_1(v) + b_2(v)$ is even;} \\
	&{}
	|b_1'(v) - b_2'(v)| = 1 {}&{}\quad {}&{}\mbox{if $b_1(v) + b_2(v)$ is odd; and} \\ 
	&{}
	b_1'(X) =b_2'(X). {}&{}{}&{}
	%%$|\{ v \in X \mid b_i'(v) = b_j'(v) + 1 \}|  = |\{ v \in X \mid b_i'(v) = b_j'(v) - 1 \}|$.
	\end{alignat*}
\item
For all $Y \in \mathcal{Y}$, 
take $b_1'(v),b_2'(v) \in \ZZ_+$ for all $v \in Y$ so that 
	\begin{alignat*}{2}
	{}&{}
	b_1'(v) = b_2'(v) = \frac{b_1(v) + b_2(v)}{2} {}&{} \quad {}&{}\mbox{if $b_1(v) + b_2(v)$ is even;} \\
	{}&{}
	|b_1'(v) - b_2'(v)| = 1 {}&{} \quad {}&{} \mbox{if $b_1(v) + b_2(v)$ is odd;} \\
	{}&{}
	\left|  b_1'(Y) - b_2'(Y) \right| = 1 {}&{}\quad {}&{}\mbox{for every $Y \in \Y$; and} \\
	{}&{} 
	\left|  \sum_{Y \in \Y}b_1'(Y) - \sum_{Y \in \Y}b_2'(Y) \right| \le 1.{}&{}{}&{} 
	\end{alignat*}
%%	\begin{itemize}
%%	\item
%%	if $d^-_{B_i}(v) + d^-_{B_j}(v)$ is even, 
%%	then 
%%	$b_i'(v) = b_j'(v) = (d^-_{B_i}(v) + d^-_{B_j}(v))/2$; 
%%	\item 
%%	if $d^-_{B_i}(v) + d^-_{B_j}(v)$ is odd, 
%%	then
%%	$|b_i'(v) - b_j'(v)| = 1$; 
%%	\item $\left|  b_i'(Y) - b_j'(Y) \right| \le 1$; 
%%	and 
%%	\item $\left|  \sum_{Y \in \Y}b_i'(Y) - \sum_{Y \in \Y}b_j'(Y) \right| \le 1$. 
%%	\end{itemize}
\item
For $v \in V\setminus (\bigcup_{X \in \X \cup \Y}X)$, 
take $b_1'(v),b_2'(v) \in \ZZ_+$ so that 
\begin{align*}
&{}|b_1'(v) - b_2'(v)| \le 1 \quad \mbox{for every $\displaystyle v \in V\setminus \bigcup_{X \in \X \cup \Y}X$; and} \\
%%\left|\sum_{v \in V}b_1'(v) - \sum_{v \in V}b_2'(v)\right| \le 1. 
&{}|b_1'(V) - b_2'(V)| \le 1. 
\end{align*}
\end{itemize}
Now 
it directly follows from \eqref{EQ1-4}--\eqref{EQ4-4} that 
$b_1' \le b$, 
$b_2' \le b$, 
and 
\begin{align*}
 b_1'(X) \le b(X)-1, \quad  b_2'(X) \le b(X)-1 \quad (X \in \X \cup \Y).
\end{align*}
It then follows from Lemma \ref{LEMbpartition} that 
there exist $b$-branchings $B_1'$ and $B_2'$ such that $B_1' \cup B_2' =  B_1 \cup B_2$, 
$d_{B_1}' = b_1'$, 
and 
$d_{B_2}' = b_2'$. 
For these two $b$-branchings $B_1'$ and $B_2'$, 
we have that 
\begin{align}
\label{EQimprv}
&{}\left| |B'_1| - |B'_2| \right| \le 1 
\quad\mbox{and}\quad
\left| d_{B'_1}^-(v) - d_{B'_2}^-(v) \right| \le 1 \quad \mbox{for every $v \in V$}.  
\end{align}
Therefore, 
%%by \eqref{EQmin} and \eqref{EQimprv}, 
we can 
strictly decrease the value \eqref{EQdiff} 
by replacing $B_1$ and $B_2$, 
which satisfy \eqref{EQij1} or \eqref{EQij2}, with $B_1'$ and $B_2'$, 
which satisfy \eqref{EQimprv}. 
This contradicts the fact that $B_1, \ldots, B_k$ minimize \eqref{EQdiff}. 
%%$b_1'(v) \le b(v)$, 
%%$b_2'(v) \le b(v)$, 
%%$b_1'(v)+b_2'(v) = d_{B_1} + d_{B_2}$, 
%%$b_1'(X) < b(X)$, 
%%and 
%%$b_2'(X) < b(X)$. 
\end{proof}

We remark that 
a partition of $A$ into $k$ $b$-branchings satisfying Conditions \ref{CONsize} and \ref{CONdegree} in Theorem \ref{THMb} can be found in polynomial time. 
First, 
we can check if there exists a partition of $A$ into $k$ $b$-branchings 
and find one if exists in polynomial time \cite{KKT20}. 
If this partition does not satisfy Conditions \ref{CONsize} and \ref{CONdegree}, 
then we repeatedly apply the update of two $b$-branchings as shown in the above proof, 
which 
can be done in polynomial time and 
strictly decreases the value \eqref{EQdiff}. 

%%\subsection{Discussions}
%%\label{SECdis}

%%Theorem \ref{THMb} leads to the integer decomposition property of 
%%the convex hull of $b$-branchings of fixed size. 
%%For a polytope $P$ and a positive integer $\kappa$, 
%%define $\kappa P = \{x \mid \exists x' \in P, x = \kappa x'\}$. 
%%A polytope $P$ has the \emph{integer decomposition property} if, 
%%for every $\kappa \in \ZZ_{++}$ and every integer vector $x \in \kappa P $, 
%%there exist $\kappa$ integer vectors $x_1,\ldots, x_{\kappa}$ such that 
%%$x = x_1+\cdots+x_{\kappa}$. 

%%Baum and Trotter \cite{BT81} showed 
%%that the branching polytope has 
%%the integer decomposition property. 
%%Moreover, 
%%McDiarmid \cite{McD83} proved the integer decomposition property of the convex hull of branchings of fixed size $\ell$. 
%%The integer decomposition property of the $b$-branching polytope is proved in \cite{KKT20}: 
%%\begin{lemma}[Kakimura, Kamiyama and Takazawa \cite{KKT20}]
%%\label{LEMidp}
%%Let $D=(V,A)$ be a digraph and 
%%$b \in \ZZ_{++}^V$. 
%%Then, 
%%the $b$-branching polytope has the integer decomposition property. 
%%\end{lemma}
%%
%%Here we derive the integer decomposition property of the convex hull of 
%%$b$-branchings of fixed size $\kappa$ 
%%from Theorem \ref{THMb} and Lemma \ref{LEMidp}. 
%%
%%\begin{corollary}
%%Let $D=(V,A)$ be a digraph, 
%%$b \in \ZZ_{++}^V$, 
%%and 
%%$\ell\in \ZZ_+$. 
%%Then, 
%%the convex hull of the incidence vectors the $b$-branchings of size $\ell$ has the integer decomposition property. 
%%\end{corollary}

We conclude this paper by deriving Theorems \ref{THMidpb2}--\ref{THMidpindeg} from Theorem \ref{THMb}. 
Here we only present a proof for Theorem \ref{THMidpb2}: 
Theorem \ref{THMidpbsize} is a special case $X = \emptyset$ of Theorem \ref{THMidpb2}; 
and 
Theorem \ref{THMidpindeg} can be proved by the same argument. 

%%\begin{proof}[Proof of Theorem \ref{THMidpb}]
%%Denote the convex hull of the $b$-branchings in $D$ of size $\ell$ by $P$. 
%%Take a positive integer $\kappa \in \ZZ_{++}$, 
%%and let $x$ be an integer vector in $\kappa P $. 
%%%%Denote those $b$-branchings by $B_1,\ldots, B_\kappa$. 
%%Let $D'=(V,A')$ be a digraph 
%%obtained from $D$ by replacing each arc $a\in A$ by 
%%$x_a$ parallel arcs. 
%%Then, 
%%it follows from the integer decomposition property of the $b$-branching polytope (Theorem \ref{T%%HMidp}) 
%%that 
%%$x$ is the sum of the indicence vectors of $\kappa $ $b$-branchings, 
%%i.e., 
%%$A'$ 
%%can be partitioned into $\kappa$ $b$-branchings. 
%%Here we have that 
%%$|A'|/\kappa = |x(A)|/\kappa = \ell$, 
%%and thus 
%%it follows from Theorem \ref{THMb} that 
%%$A'$ can be partitioned into $\kappa$ $b$-branchings all of which have size $\ell$. 
%%Therefore, 
%%$x$ can be represented as the sum of the incidence vectors of $\kappa$ $b$-branchings of size $\%%ell$, 
%%i.e.,\ 
%%integer vectors in $P$, 
%%which completes the proof. 
%%\qed
%%\end{proof}

\begin{proof}[of Theorem \ref{THMidpb2}]
Denote the convex hull of the $b$-branchings in $D$ by $P$, 
and that of the $b$-branchings in $D$  
satisfying \ref{ENUsize} and \ref{ENUindeg} 
by $Q$. 
Take a positive integer $\kappa \in \ZZ_{++}$, 
and let $x$ be an integer vector in $\kappa Q$. 
For a vertex $v\in V$, 
let $\delta^- (v) \subseteq A$ denote the set of arcs whose head is $v$. 
Observe that
\begin{align}
\label{EQsize1}
{}&{}x(A) = \kappa \cdot \ell, \\
\label{EQindeg1}
{}&{}x(\delta^- (v)) = \kappa \cdot b'(v) \quad \mbox{for each $v \in V'$}. 
\end{align}
follow from the definition of $Q$. 

Let $D'=(V,A')$ be a digraph 
obtained from $D$ by replacing each arc $a\in A$ by 
$x_a$ parallel arcs. 
Then, 
since $x \in \kappa Q \subseteq \kappa P$, 
it follows from the integer decomposition property of the $b$-branching polytope (Theorem \ref{THMidp}) 
that 
$x$ is the sum of the incidence vectors of $\kappa $ $b$-branchings, 
i.e., 
$A'$ 
can be partitioned into $\kappa$ $b$-branchings. 
Here it directly follows from \eqref{EQsize1} and \eqref{EQindeg1} that 
\begin{align*}
{}&{}|A'| = |x(A)| = \kappa \cdot \ell, \\
%%|A'|/\kappa = |x(A)|/\kappa = \ell, 
{}&{}|d_{A'}^- (v)| =  \kappa \cdot b'(v) \quad (v \in V'),
\end{align*}
and thus 
it follows from Theorem \ref{THMb} that 
$A'$ can be partitioned into $\kappa$ $b$-branchings $B_1,\ldots, B_{\kappa}$ 
such that 
\begin{alignat*}{2}
{}&{}|B_i| = \ell{}&{} \quad {}&{}(i \in [\kappa]), \\
{}&{}d_{B_i}^-(v)=b'(v){}&{} \quad{}&{}(i \in [\kappa], v \in V'). 
\end{alignat*}
Therefore, 
$x$ can be represented as the sum of the incidence vectors of $\kappa$ $b$-branchings satisfying \ref{ENUsize} and \ref{ENUindeg}, 
i.e.,\ 
integer vectors in $Q$, 
which completes the proof. 
\end{proof}

\section*{Acknowledgements}
The author thanks an anonymous reviewer for helpful comments on the relation to \cite{KY22}.

\appendix 
\def\thesection{Appendix \Alph{section}}

\section{Alternative Proof for Theorem \ref{THMmf}}
\label{APPky}

Here we present an alternative proof for Theorem \ref{THMmf}. 
On the way to proving Theorem \ref{THMky1}, 
Kir\'{a}ly and Yokoi \cite{KY22} showed the following lemma. 
\begin{lemma}[Lemma 4.2 in \cite{KY22}]
\label{LEMky}
Let $G=(V,E,A)$ be a mixed graph. 
If $E\cup A$ can be partitioned into two matching forests, 
then 
$E\cup A$ can be partitioned into two matching forests $F_1$ and $F_2$ such that
$\left| |F_1| - |F_2| \right| \leq 1$ and 
$ | |F_1| - |F_2| | + | |M_1| - |M_2| |  \leq 2$, 
where 
$M_1=F_1 \cap E$ and $M_2 = F_2 \cap E$. 
\end{lemma}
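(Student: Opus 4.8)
The plan is to prove the lemma by a local-exchange argument in the spirit of the proof of Theorem~\ref{THMmf} and of Schrijver's exchangeability of matching forests, but now tracking two quantities simultaneously. Writing $f=|F_1|-|F_2|$ and $m=|M_1|-|M_2|$, and $B_i=F_i\cap A$, I would first record the identity $f=m+(|B_1|-|B_2|)$; since the parities of $f$ and $m$ are fixed by $|E\cup A|$ and $|E|$, the two target inequalities $|f|\le 1$ and $|f|+|m|\le 2$ amount to driving both $|f|$ and $|m|$ down to their parity-forced minima (each $0$ or $1$) at the same time. I would therefore take a partition of $E\cup A$ into two matching forests $F_1=M_1\cup B_1$, $F_2=M_2\cup B_2$ that is lexicographically minimal: first minimizing $|f|$, and subject to that minimizing $|m|$, and aim to show that such a minimizer satisfies both inequalities, deriving a contradiction from an improving exchange otherwise.

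The exchange operation is exactly the one used in the proof of Theorem~\ref{THMmf}: form the auxiliary graph $H=(V,M_1\cup M_2\cup N)$, where $N$ picks, in each source component $X$ of $(V,B_1\cup B_2)$ with $|X|\ge 2$, one pair joining a vertex of $R(B_1)$ to a vertex of $R(B_2)$. Since $H$ has maximum degree two it splits into paths and cycles, and swapping the matching edges along a path $P$ together with the induced reassignment of the branching roots on $V(P)$---realizable as a genuine repartition into two branchings by Lemma~\ref{LEMsch}, with the covering constraint $R(M_i')\cup R(B_i')=V$ checked as in the proof of Theorem~\ref{THMmf}---yields a new partition $F_1',F_2'$ of $E\cup A$ into two matching forests. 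The essential bookkeeping is that, setting $\delta=|E(P)\cap M_1|-|E(P)\cap M_2|$ and $\rho=|R(B_1)\cap V(P)|-|R(B_2)\cap V(P)|$, a single swap gives $m'=m-2\delta$ and $f'=f+2(\rho-\delta)$. Thus a path with $\rho-\delta$ of the right sign strictly decreases $|f|$ (needed while $|f|\ge 2$), whereas a path with $\delta\neq 0$ and $\rho=\delta$ alters $m$ while leaving $f$ fixed (needed to reduce $|m|$ once $|f|$ is already at its minimum); cycles contribute $\delta=\rho=0$ and are irrelevant.

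What remains, and what I expect to be the main obstacle, is to guarantee that whenever the minimizer violates $|f|\le 1$ or $|f|+|m|\le 2$, an improving path of the required $(\delta,\rho)$-type actually exists. By the reasoning of Proposition~\ref{PROPinternal}, every internal vertex of $H$ lies in $\partial F_1\cap\partial F_2$, so the useful paths are those whose endpoints lie in $\partial M_1\sd \partial M_2$ or in $\partial B_1\sd\partial B_2$, and the attainable values of $\delta$ and $\rho$ are dictated by whether each endpoint is covered by a matching edge only, is an endpoint of an $N$-edge (hence a branching root of its source component), or is uncovered. Enumerating these endpoint types over the two ends of $P$ is precisely the case analysis that forces several path shapes---this is the point at which Kir\'aly and Yokoi needed ten types of alternating paths. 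I would organize the cases by the ordered pair (type of one endpoint, type of the other), and in each case where the inequalities fail exhibit a path that either decreases $|f|$, or keeps $|f|$ minimal while decreasing $|m|$, contradicting lexicographic minimality; the delicate part will be the mixed cases where one endpoint is matching-covered and the other is a branching root, since there $\delta$ and $\rho$ must be balanced against each other to hit $\rho=\delta$ without inflating $|f|$.
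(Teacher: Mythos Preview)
The paper does not contain a proof of this lemma at all: Lemma~\ref{LEMky} is quoted verbatim from Kir\'aly and Yokoi \cite{KY22} and only \emph{used} (in Appendix~\ref{APPky}) to give an alternate derivation of Theorem~\ref{THMmf}. The paper's sole comment on its proof is that it ``involves some careful case-by-case analysis, requiring ten types of alternating paths,'' which is precisely why the author offers the direct argument for Theorem~\ref{THMmf} instead. So there is no ``paper's own proof'' to compare your proposal to.

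That said, your outline is a faithful reconstruction of the Kir\'aly--Yokoi strategy, recast in the auxiliary-graph language of Section~\ref{SECmf}. Your bookkeeping $m'=m-2\delta$ and $f'=f+2(\rho-\delta)$ is correct (using $|B_i|=|V|-|R(B_i)|$), and the reduction to Lemma~\ref{LEMsch} for the branching repartition is exactly the mechanism used in the proof of Theorem~\ref{THMmf}. Where your proposal stops short is precisely where the real work lies: you acknowledge that ``enumerating these endpoint types \ldots\ is precisely the case analysis that forces several path shapes,'' but you do not carry it out. In particular, for the second phase (reducing $|m|$ while keeping $|f|$ at its parity minimum) you need a path with $\delta\neq 0$ and $\rho=\delta$, and no global counting argument on $\partial F_1\sd\partial F_2$ supplies such a path---one genuinely has to classify the endpoint configurations and, in the mixed cases, sometimes concatenate or choose among several candidate paths. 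Your lexicographic potential (minimize $|f|$, then $|m|$) is a clean way to organize the contradiction, but it does not by itself shortcut the ten-case enumeration; the delicate cases you flag at the end are exactly the ones that make the original proof long. One small caveat: your assertion that ``cycles contribute $\delta=\rho=0$'' deserves a line of justification, since a priori a cycle in $H$ could have, say, only $M_1$- and $N$-edges; it is the alternation of $R(B_1)$/$R(B_2)$ along $N$-edges together with $\partial M_i\subseteq R(B_i)$ that forces the balance.
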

Theorem \ref{THMmf} can be derived from Lemma \ref{LEMky} in the following way. 

\begin{proof}[of Theorem \ref{THMmf}]
We can assume $k \ge 2$. 
Let $F_1,\ldots ,F_k$ be matching forests minimizing
\begin{align}
\label{EQdiffsumapp}
\sum_{1 \le i < j \le k}
\left||\partial F_i| - |\partial F_j|\right|
\end{align}
among those partitioning $E \cup A$. 

Suppose to the contrary that 
\eqref{EQmf} does not hold for some $i,j \in [k]$. 
Without loss of generality, 
assume 
\begin{align}
\label{EQdiffapp}
 ||\partial F_1| - |\partial F_2| | \ge 3. 
\end{align}
Then, 
it follows from Lemma \ref{LEMky} that $F_1 \cup F_2$ can be partitioned into 
two matching forests 
$F_1'$ and $F_2'$ such that
\begin{align*}
&| |F_1'| - |F_2'| | \leq 1, \\
& | |F_1'| - |F_2'| | + | |M_1'| - |M_2'| |  \leq 2,
\end{align*}
where 
$M_1'=F_1' \cap E$ and $M_2' = F_2' \cap E$. 
Observe that 
$|\partial F|=|F| + |F \cap E|$ holds 
for an arbitrary matching forest $F$. 
Thus, 
\begin{align*}
| |\partial F_1'| - |\partial F_2'| |
{}&{}=     | (|F_1'| + |M_1'|) - (|F_2'| + |M_2'|) |  \\
{}&{}\leq  | |F_1'| - |F_2'|| + ||M_1'| - |M_2'| | \\
{}&{}\leq 2.
\end{align*}
It then follows from \eqref{EQdiffapp} 
that 
$| |\partial F_1'| - |\partial F_2'| | < ||\partial F_1| - |\partial F_2| |$.
It also follows that 
\begin{align*}
{}&{}\sum_{i \in [k] \setminus \{1,2\}} \left( \left| \left|\partial F_1'\right| - \left|\partial F_i\right| \right| + \left|\left|\partial F_2'\right| - \left|\partial F_i\right|\right|\right) \\
\le 
{}&{}\sum_{i \in [k] \setminus \{1,2\}} \left( \left| \left|\partial F_1\right| - \left|\partial F_i\right| \right| + \left|\left|\partial F_2\right| - \left|\partial F_i\right|\right|\right) .
\end{align*}
This contradicts the fact that $F_1,\ldots, F_k$ minimize \eqref{EQdiffsumapp}, 
and thus completes the proof of the theorem. 
\end{proof}

%%\acknowledgements
%%\label{sec:ack}
%%At the end of the manuscript, right before the bibliography you might
%%want to place an acknowledgment. This can be easily done by using the 
%%command \verb!\acknowledgements! as you can see here.

%%\nocite{*}
%%\bibliographystyle{abbrvnat}

% use the following instead if you encounter problems 
%\bibliographystyle{alpha}
%%\bibliography{final}
%%\label{sec:biblio}
\bibliographystyle{myjorsj2}
\bibliography{../../../../../refs}

\begin{thebibliography}{10}

\bibitem{BT81}
S.~Baum and L.E. {\protect Trotter, Jr.}: Integer rounding for polymatroid and
  branching optimization problems, {\em SIAM Journal on Algebraic and Discrete
  Methods}, 2 (1981), 416--425.

\bibitem{Boc71}
F.~Bock: An algorithm to construct a minimum directed spanning tree in a
  directed network, in {\em Developments in Operations Research}, Gordon and
  Breach, 1971,  29--44.

\bibitem{Bou87}
A.~Bouchet: Greedy algorithm and symmetric matroids, {\em Mathematical
  Programming}, 38 (1987), 147--159.

\bibitem{Bou89}
A.~Bouchet: Matchings and $\Delta$-matroids, {\em Discrete Applied
  Mathematics}, 24 (1989), 55--62.

\bibitem{CK88}
R.~Chandrasekaran and S.N. Kabadi: Pseudomatroids, {\em Discrete Mathematics},
  71 (1988), 205--217.

\bibitem{CL65}
Y.J. Chu and T.H. Liu: On the shortest arborescence of a directed graph, {\em
  Scientia Sinica}, 14 (1965), 1396--1400.

\bibitem{DM76}
J.~Davies and C.~McDiarmid: Disjoint common transversals and exchange
  structures, {\em The Journal of the London Mathematical Society}, 14 (1976),
  55--62.

\bibitem{deW70}
D.~{de Werra}: On some combinatorial problems arising in scheduling, {\em
  Canadian Operations Research Journal}, 8 (1970), 165--175.

\bibitem{deW72}
D.~{de Werra}: Decomposition of bipartite multigraphs into matchings, {\em
  Zeitschrift f\"ur Operations Research}, 16 (1972), 85--90.

\bibitem{DH86}
A.W.M. Dress and T.~Havel: Some combinatorial properties of discriminants in
  metric vector spaces, {\em Advances in Mathematics}, 62 (1986), 285--312.

\bibitem{DM69}
A.L. Dulmage and N.S. Mendelsohn: Some graphical properties of matrices with
  non-negative entries, {\em Aequationes Mathematicae}, 2 (1969), 150--162.

\bibitem{Edm67}
J.~Edmonds: Optimum branchings, {\em Journal of Research National Bureau of
  Standards{\em ,} Section~B}, 71 (1967), 233--240.

\bibitem{Edm73}
J.~Edmonds: Edge-disjoint branchings, in R.~Rustin, ed., {\em Combinatorial
  Algorithms}, Algorithmics Press, 1973,  91--96.

\bibitem{Erd64}
P.~Erd\H{o}s: Problem 9, in M.~Fiedler, ed., {\em Theory of Graphs and its
  Applications}, Czech Academy of Sciences, 1964,  159.

\bibitem{FF69}
J.~Folkman and D.R. Fulkerson: Edge colorings in bipartite graphs, in R.C. Bose
  and T.A. Dowling, eds., {\em Combinatorial Mathematics and Its Applications
  (Proceedings Conference Chapel Hill, North Carolina, 1967)}, The University
  of North Carolina Press, Chapel Hill, North Carolina, 1969,  561--577.

\bibitem{FTY20}
S.~Fujishige, K.~Takazawa and Y.~Yokoi: A note on a nearly uniform partition
  into common independent sets of two matroids, {\em Journal of the Operations
  Research Society of Japan}, 63 (2020), 71--77.

\bibitem{Ful74}
D.R. Fulkerson: Packing rooted directed cuts in a weighted directed graph, {\em
  Mathematical Programming}, 6 (1974), 1--13.

\bibitem{Gil82I}
R.~Giles: Optimum matching forests {I}: Special weights, {\em Mathematical
  Programming}, 22 (1982), 1--11.

\bibitem{Gil82II}
R.~Giles: Optimum matching forests {II}: General weights, {\em Mathematical
  Programming}, 22 (1982), 12--38.

\bibitem{Gil82III}
R.~Giles: Optimum matching forests {III}: Facets of matching forest polyhedra,
  {\em Mathematical Programming}, 22 (1982), 39--51.

\bibitem{HS70}
A.~Hajnal and E.~Szemer\'{e}di: Proof of a conjecture of {P. Erd\H{o}s}, in
  P.~Erd\H{o}s, A.~R\'{e}nyi and V.~S\'{o}s, eds., {\em Combinatorial Theory
  and its Applications, {II} (Proceedings Colloquium on Combinatorial Theory
  and its Applications, Balatonf\"{u}red, Hungary, 1969)}, North-Holland,
  Amsterdam, 1970,  601--623.

\bibitem{KKT20}
N.~Kakimura, N.~Kamiyama and K.~Takazawa: The $b$-branching problem in
  digraphs, {\em Discrete Applied Mathematics}, 283 (2020), 565--576.

\bibitem{Kei03}
J.~Keijsper: A {Vizing}-type theorem for matching forests, {\em Discrete
  Mathematics}, 260 (2003), 211--216.

\bibitem{KY22}
T.~Kir{\'{a}}ly and Y.~Yokoi: Equitable partitions into matchings and coverings
  in mixed graphs, {\em Discrete Mathematics}, 345 (2022), 112651.

\bibitem{McD83}
C.~McDiarmid: Integral decomposition in polyhedra, {\em Mathematical
  Programming}, 25 (1983), 183--198.

\bibitem{McD72}
C.J.H. McDiarmid: The solution of a timetabling problem, {\em Journal of the
  Institute of Mathematics and Its Applications}, 9 (1972), 23--34.

\bibitem{MT21}
K.~Murota and K.~Takazawa: Relationship of two formulations for shortest
  bibranchings, {\em Japan Journal of Industrial and Applied Mathematics}, 38
  (2021), 141--161.

\bibitem{Sch00MF}
A.~Schrijver: Total dual integrality of matching forest constraints, {\em
  Combinatorica}, 20 (2000), 575--588.

\bibitem{Sch03}
A.~Schrijver: {\em Combinatorial Optimization---Polyhedra and Efficiency},
  Springer, Heidelberg, 2003.

\bibitem{Tak12bibr}
K.~Takazawa: Shortest bibranchings and valuated matroid intersection, {\em
  Japan Journal of Industrial and Applied Mathematics}, 29 (2012), 561--573.

\bibitem{Tak14}
K.~Takazawa: Optimal matching forests and valuated delta-matroids, {\em SIAM
  Journal on Discrete Mathematics}, 28 (2014), 445--467.

\bibitem{Tak22}
K.~Takazawa: The $b$-bibranching problem: {TDI} system, packing, and discrete
  convexity, {\em Networks}, 79 (2022), 32--46.

\end{thebibliography}

\end{document}